\newtheorem{defi}{Определение}
\newtheorem{teor}{Теорема}
\newtheorem{lem}{Лемма}[section]
\newtheorem{pre}{Предложение}[section]
\newtheorem{gip}{Гипотеза}
\begin{document}

\textbf{\large Зверев Иван Сергеевич}

\smallskip

\textbf{\large Разрезания трапеций на трапеции, гомотетичные трапециям данного набора}

\smallskip

{\large Национальный исследовательский университет <<Высшая школа экономики>>.}

\bigskip

В данной работе доказаны три теоремы, связанные с разрезанием трапеций на трапеции, гомотетичные данным.

В теореме \ref{my_big} доказано, что гомотетиями трапеции с рациональным отношением оснований можно замостить любую трапецию с рациональным отношением оснований и такими же углами, но нельзя замостить никаких других трапеций. 

Далее я перехожу к трапециям, отношение оснований которых является квадратичной иррациональностью. В теореме \ref{twodim} для некоторых пар трапеций доказывается, что их гомотетиями можно замостить любую трапецию с такими же углами и отношением оснований из того же квадратичного поля. В теореме \ref{log} ещё для некоторых трапеций с квадратично-иррациональным отношением оснований приведено необходимое условие на трапеции, которые можно ими замостить. Это условие примечательно тем, что содержит трансцендентную функцию. Это первое появление трансцендентной функции в задачах о замощении многоугольников подобными многоугольниками.

\section{Введение}

\footnotetext{Автор поддержан грантом Президента РФ МК-6137.2016.1}

Первая сложная проблема, связанная с замощениями подобными многоугольниками, была рассмотрена Максом Деном в 1903 году и заключалась в замощении прямоугольников квадратами \cite{german}. Ден доказал, что квадратами можно замостить только прямоугольник с рациональным отношением сторон.

В 1940-ом году вышла статья Брукса, Смита, Стоуна и Татта о разрезании прямоугольника на квадраты попарно различного размера \cite{BST}. В ней было доказано, что прямоугольник с рациональным отношением сторон всегда можно разрезать на попарно различные квадраты, причём бесконечным числом способов. Также именно в ней впервые имело место сопоставление замощений прямоугольника квадратами и электрических цепей, при помощи которого с большей легкостью передоказывалась теорема Дена (определение электрических цепей, как и их применение, вы найдёте в разделе \ref{ken_proof} данной работы).

В дальнейшем задача о разрезании прямоугольника на прямоугольники была в достаточной степени обобщена и исследована. В 1994-ом году Фрайлинг, Ласкович, Секереш и Ринн сформулировали критерий, при выполнении которого квадрат можно разрезать на прямоугольники с данным отношением сторон \cite{FL1,FL2}. В 1997-ом они обобщили свой результат и привели критерий того, что данный прямоугольник замощается прямоугольниками с данным отношением сторон \cite{FLR2}.

Наконец, стоит отметить недавнюю статью Шарова \cite{Sharov}, в которой для прямоугольников с отношением сторон из $\mathbb Q[\sqrt d]$ доказан критерий того, что данный прямоугольник можно замостить прямоугольниками с отношением сторон \emph{из данного множества} (в статьях Фрайлинга-Ласковича-Секереша-Ринна отношение сторон замощающих прямоугольников было фиксировано). Я в своей работе также обращаюсь к полю $\mathbb Q[\sqrt d]$, поэтому ниже приведу полную формулировку этого критерия для сравнения с результатами, полученными для трапеций.

Также были известны теоремы о замощениях другими многоугольниками; в частности, в статье Сегеди \cite{triangles} выведен критерий того, что квадрат замощается прямоугольными треугольниками с фиксированным углом.

Теперь я хочу перейти к статье Кеньона \cite{ken}, больше остальных повлиявшей на мою работу. Кеньон несколько изменяет модель электрических цепей, использованную ранее, так что теперь ей можно описать не только замощения прямоугольниками, но и замощения трапециями.

Важный результат Кеньона состоит в том, что если отношения оснований к высоте для замощающих трапеций рациональны, то и отношения оснований к высоте замощённой ими трапеции рациональны. Теорема $\ref{kenyon_big}$ ниже -- это чуть более общий вариант теоремы Кеньона, для отношений оснований к высоте, лежащих в некотором подполе $\mathbb K\subset \mathbb R$. В таком виде она будет необходима мне в дальнейшем. Чтобы продемонстрировать связь электрических цепей с разрезаниями трапеций, я привожу в разделе $\ref{ken_proof}$ по сути принадлежащее Кеньону доказательство этой теоремы.

\bigskip

\begin{teor}\cite[следствие 7]{ken}
\label{kenyon_big}
Пусть $A_1, A_2, ..., A_n$ -- трапеции, все основания которых параллельны. Пусть для любой из этих трапеций отношение каждого основания к высоте лежит в подполе $\mathbb K\subset \mathbb R$. Тогда, если трапецию $B$ можно замостить трапециями, гомотетичными $A_1, A_2, ..., A_n$, то отношения оснований трапеции $B$ к высоте тоже лежат в $\mathbb K$.
\end{teor}

\bigskip

Самая общая формулировка вопроса, на который я стремлюсь ответить: <<Какие трапеции можно замостить трапециями, гомотетичными трапециям данного набора>>? В настолько общей формулировке вопрос вряд ли допускает обозримый ответ, поэтому для набора приходится вносить ограничения.

Во-первых, я хочу рассматривать наборы, все основания трапеций которого параллельны. Кеньону это условие было необходимо, чтобы работала физическая модель (расстояния между прямыми, содержащими основания, сопоставлялись разности потенциалов в цепи, см. лемму $\ref{odd}$ ниже).

Во-вторых, я хочу рассматривать наборы, у всех трапеций которых одинаковы углы при основании. Теорема Кеньона верна и для трапеций с разными углами, однако я собираюсь параметризовать трапеции числами, поэтому класс рассматриваемых трапеций приходится ограничивать.

В-третьих, я буду рассматривать наборы трапеций, у которых оба угла при основании равны $45^o$. Это служит упрощению обозначений -- решив задачу для равнобедренных трапеций с углами по $45^o$, результаты нетрудно переформулировать для других трапеций, воспользовавшись афинными преобразованиями.

\bigskip

Для трапеций с рациональным отношением оснований всё просто: любая такая трапеция может замостить любую другую с такими же углами при основании (в теореме \ref{my_big} написано <<с рациональными средними линиями>>, однако для трапеций единичной высоты с углами $45^o$ эти свойства равносильны). Тем не менее, построить конструкцию замощения здесь сложнее, чем при доказательстве того же факта для прямоугольников (см. рис. \ref{e1}). Следующая теорема является новой.

\begin{teor}[о рациональных средних линиях] \label{my_big}
Пусть $A, B$ -- трапеции единичной высоты с параллельными основаниями, углами $45^o$ при основании и средними линиями $a,b,$ соответственно. Пусть $a\in \mathbb Q$. Тогда трапециями, гомотетичными $A$, можно замостить трапецию $B$, если и только если $b\in\mathbb Q$.
\end{teor}

\begin{figure}[h!]
\centering
\includegraphics[scale=0.8]{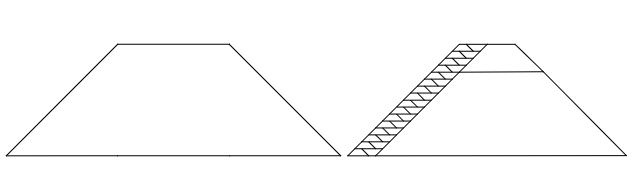}
\caption{Пример к теореме \ref{my_big}: гомотетии трапеции со средней линией 2 замощают трапецию со средней линией $\frac32$ (обе трапеции -- высоты $1$ и с углами $45^o$ при основании)}\label{e1}
\end{figure}

Следующие теоремы связаны с квадратичным расширением $\mathbb Q$, поэтому я введу необходимые определения уже здесь.

\begin{defi}Пусть $d > 0, d\in \mathbb Q, \sqrt d \not\in\mathbb Q$. Тогда определим множество $\mathbb Q[\sqrt d]$ как все числа вида $a+b\sqrt d$, где $a,b\in \mathbb Q$.
\end{defi}
Ясно, что всякий $x\in\mathbb Q[\sqrt d]$ представляется в виде $a+b\sqrt d$ для $a,b\in \mathbb Q$ единственным образом. Это делает следующее определение корректным.
\begin{defi}
Пусть $x\in\mathbb Q[\sqrt d]$ и $x = a+b\sqrt d$ для $a,b\in\mathbb Q$. Тогда обозначим через $\overline x$ число $a-b\sqrt d$.
\end{defi}
В случае с трапециями, отношения оснований которых лежат в $\mathbb Q[\sqrt d]$, очевидно, что не для любой трапеции её гомотетиями можно замостить все остальные. Это следует, как минимум, из теоремы \ref{my_big}. Однако, следующая теорема показывает, насколько легко найти две трапеции, гомотетии которых могут замостить любую трапецию с отношением оснований из $\mathbb Q[\sqrt d]$.

\begin{teor}[о средних линиях из $\mathbb Q[\sqrt d\rbrack$]
\label{twodim}

Пусть $d > 0, d\in\mathbb Q,\sqrt d\not\in\mathbb Q$. Пусть $A,B,C$ -- трапеции единичной высоты c параллельными основаниями, углами $45^o$ при основании, и длины их средних линий равны соответственно $a,b,c$. Пусть также $a,b\in\mathbb Q[\sqrt d]$ и $\overline a > 0$, $\overline b < 0$. Тогда трапециями, гомотетичными $A$ и $B$, можно замостить $C$, если и только если $c\in\mathbb Q[\sqrt d]$.
\end{teor}

Здесь стоит процитировать теорему Шарова, чтобы провести необходимые параллели с замощениями прямоугольников:

\begin{teor}\cite[теорема 1]{Sharov}.
\label{she}
Пусть $x_1=a_1+b_1\sqrt p, ..., x_n = a_n+b_n\sqrt p$ -- такие числа, что $x_i>0, a_i,b_i,p\in\mathbb Q\ (1\le i\le n)$ и $\sqrt p \not\in \mathbb Q$. Тогда:

1) если существуют такие числа $i$ и $j$, что $1\le i,j\le n$ и $(a_i-b_i\sqrt p)(a_j-b_j\sqrt p) < 0$, то прямоугольник с отношением сторон $z$ можно разрезать на прямоугольники с отношениями
сторон $x_1,...,x_n$ тогда и только тогда, когда
$$z\in \{e+f\sqrt p > 0 | e,f\in \mathbb Q\}$$

2) если для всех $i$ ($1\le i\le n$) $a_i-b_i\sqrt p>0$, то прямоугольник с
отношением сторон $z$ можно разрезать на прямоугольники с отношениями сторон $x_1,...,x_n$ тогда и только тогда, когда

$$z\in \left\{e+f\sqrt p | e,f\in \mathbb Q,  e>0, \frac{|f|}{e} \le \max_i\frac{|b_i|}{a_i}\right\}$$

3) если для всех $i$ ($1\le i\le n$) $a_i-b_i\sqrt p<0$, то прямоугольник с
отношением сторон $z$ можно разрезать на прямоугольники с отношениями сторон $x_1,...,x_n$ тогда и только тогда, когда

$$z\in \left\{e+f\sqrt p | e,f\in \mathbb Q,  f>0, \frac{|e|}{f} \le \max_i\frac{|a_i|}{b_i}\right\}$$

\end{teor}

Теорема \ref{twodim} -- это аналог случая 1) для трапеций в теореме \ref{she}. Если мы будем рассматривать замощения гомотетиями трапеций данного набора, то найдя в наборе две трапеции со средними линиями $a,b$ такими что $\overline a\overline b < 0$, мы замостим их гомотетиями любую трапецию со средней линией $c$, такой что $c\in\mathbb Q[\sqrt d], c > 1$. То, что всеми трапециями набора мы не замостим больше, чем данными двумя, следует, например, из теоремы \ref{kenyon_big}.

Наша последняя теорема \ref{log} -- это необходимое условие на тривиальное разрезание некоторых трапеций с отношением оснований из $\mathbb Q[\sqrt d]$ на трапеции, гомотетичные данной. Тривиальные разрезания, о которых в ней говорится -- это разрезания особого вида, которые я определю в разделе \ref{tri}. У меня было две причины перейти к ним -- во-первых, по опыту исследований прямоугольников можно выдвинуть гипотезу, что если сушествует разрезание трапеции на гомотетичные заданным, то существует и тривиальное разрезание. Во-вторых, для них есть вычислительно сложный, но гарантированный способ доказать незамостимость одной трапеции другой. В будущих работах я собираюсь сначала найти достаточное условие тривиальной замостимости одной трапеции гомотетиями другой, а затем уже перейти от тривиальных разрезаний к произвольным.

Замечу также, что этот результат показывает, что у случаев 2) и 3) теоремы Шарова при переходе от прямоугольников к трапециям не появляется очевидных аналогов.

Я считаю этот результат одним из самых важных, потому что в нём впервые появляется трансцендентная функция (логарифм) от отношения оснований трапеции. Ранее в результатах о замощениях многоугольников подобными многоугольниками возникали только рациональные функции от сторон (в качестве примеров приведу \cite[теорема 1]{Sharov}, \cite[теорема 10]{FL2},\cite[теорема 5]{FLR2}).

\begin{teor}\label{log}
Пусть $A, B$ -- трапеции единичной высоты c параллельными основаниями, углами $45^o$ при большем основании и средними линиями $a, b\in \mathbb Q[\sqrt d]$, причём $1 < \overline a < a$. Пусть трапециями, гомотетичными трапеции $A$, можно тривиально замостить трапецию $B$.
Тогда $b$ удовлетворяет следующим условиям:

(i) $1 < \overline b < b$;

(ii) $\overline b/ b \ge \overline a/a$;

(iii) $\ln G(b)/ \ln \overline {G(b)} \ge \ln G(a)/ \ln \overline{G(a)} $, где $G(x) = (x-1)/(x+1).$
\end{teor}

Следующее предложение показывает, что несторогое неравенство (iii) в теореме \ref{log} является точным, и к тому же обращается в равенство в бесконечном количестве случаев.

\begin{pre}\label{last}
Пусть $A$ -- трапеция единичной высоты с углами $45^o$ при большем основании и средней линией $a\in \mathbb Q[\sqrt d]$, причём $1 < \overline a < a$. 

Тогда сущетвует бесконечно много различных $b\in \mathbb Q[\sqrt d]$, таких что выполнены условия: \begin{itemize}\item $\ln G(b)/ \ln \overline {G(b)} = \ln G(a)/ \ln \overline{G(a)} $ \item Трапециями, гомотетичными трапеции $A$, можно тривиально замостить трапецию единичной высоты с углами $45^o$ при большем основании и средней линией $b$.
\end{itemize}
\end{pre}

Основываясь на предложении \ref{last}, я выдвигаю гипотезу о достаточности указанных неравенств.

\begin{gip}\label{gi}
Пусть $A, B$ -- трапеции единичной высоты c параллельными основаниями, углами $45^o$ при большем основании и средними линиями $a, b\in \mathbb Q[\sqrt d]$, причём $1 < \overline a < a$. Пусть $b$ удовлетворяет условиям из теоремы \ref{log}. Тогда трапециями, гомотетичными трапеции $A$, можно замостить трапецию $B$.
\end{gip}

Для лучшего понимания доказательства теорем \ref{twodim} и \ref{log} и их геометрического смысла я рекомендую обратиться к разделу \ref{geom} данной работы.

\section{Организация работы}
В работе по порядку доказывается четыре большие теоремы, а именно теоремы \ref{kenyon_big}, \ref{my_big}, \ref{twodim} и \ref{log}. Разделы, выделенные непосредственно на доказательства, перемежаются разделами, содержащими вспомогательные конструкции.

В разделе \ref{ken_proof} я привожу полное доказательство теоремы \ref{kenyon_big}, без больших изменений переписанное из работы Кеньона. В том числе, в этот раздел входит описание понятия <<электрической цепи>>, необходимое в данном доказательстве (но ненужное для понимания доказательства остальных теорем).

В разделе \ref{notion} вводятся удобные нам обозначения для геометрических фигур, а также несложные леммы о замощениях, сформулированные в терминах этих обозначений. Эти леммы и определения многократно используются в доказательствах теорем \ref{my_big}, \ref{twodim} и \ref{log}, особенно часто приходится ссылаться на лемму \ref{base}.

В разделе \ref{me_proof} доказана теорема \ref{my_big}. С использованием лемм из раздела \ref{notion} и теоремы \ref{kenyon_big} доказательство получается коротким и несложным.

\bigskip

Далее идут разделы \ref{hlems}, \ref{di_proof}, по которым разнесено доказательство теоремы \ref{twodim}. Конструкция доказательства уже гораздо сложнее, чем для теоремы \ref{my_big}, хотя методы и похожи. Отдельную сложность может представлять неясный интуитивно смысл функции $h$ или описывающих её лемм (\ref{h1}, \ref{h2}, \ref{h3}). Для лучшего понимания стоит обращаться к разделу \ref{geom}.

В разделе \ref{tri} определяются тривиальные разрезания и вводятся леммы связанные с ними. 

В разделе \ref{lo_proof} доказана теорема \ref{log}, в которой используется понятие тривиальных разрезаний.

Раздел \ref{geom} не содержит новых результатов, но поясняет старые. При работе с полем $\mathbb Q[\sqrt d]$ можно представлять число $a\in\mathbb Q[\sqrt d]$ как точку $(\overline a, a)$ на координатной плоскости. Это представление придаёт утверждениям из разделов \ref{hlems}, \ref{di_proof}, \ref{lo_proof} наглядный геометрический смысл.

\section {Доказательство теоремы \ref{kenyon_big}}\label{ken_proof}

Этот раздел не содержит новых результатов и, по сути, повторяет доказательство из работы \cite[следствие\ 7]{ken} с тем замечанием, что вместо $\mathbb Q$ мы рассматриваем произвольное подполе $\mathbb K\subset \mathbb R$.

\smallskip

\begin{defi}\label{zep}\emph{\emph{Электрическая цепь} -- это ориентированный взвешенный граф с положительными весами рёбер и двумя выделенными вершинами $N$ и $P$; при этом для любой вершины $x$, кроме $N$ и $P$, существуют ориентированные пути от $x$ до $N$ и от $x$ до $P$. Количество рёбер, идущих из $x$ в $y$, будет обозначаться через $c_{xy}$, вес $k$-того из них через $[xy]_k$.}
\end{defi}

\begin{defi}\label{pot}\emph{\emph{Потенциал} -- это вещественнозначная функция $w$ на множестве вершин электрической цепи, удовлетворяющая следующим условиям:}

1) $w(N) = 1, w(P) = 0$;

2) Для любого $x$, не равного $N$ и $P$, верно

$$\sum_{y\not = x}\sum_{i=1}^{c_{xy}}[xy]_i(w(x)-w(y))=0.$$
\end{defi}

\begin{defi} (см. рис. \ref{traa})
 \emph{Для произвольного замощения трапеции трапециями, все основания которых параллельны, обозначим через $S$ множество всех точек всех оснований трапеций замощения. Множество $S$ представимо в виде объединения нескольких непересекающихся отрезков (очевидно, единственным образом). Отрезки такого разбиения будем называть \emph{разрезами} (на рис. \ref{traa} изображены жирными линиями). В частности, оба основания замощённой трапеции всегда являются разрезами.}
\end{defi}

В пример приведу рисунок \ref{traa}. На нём трапеция $ACMJ$ разбита на шесть различных трапеций. Объединение их оснований можно представить в виде объединения непересекающихся отрезков только одним способом: $AC\sqcup DG\sqcup HI \sqcup JM$ (выделены жирным). Поэтому для данного разбиения имеем четыре разреза.

\begin{figure}[h!]
\centering
\includegraphics[scale=0.6]{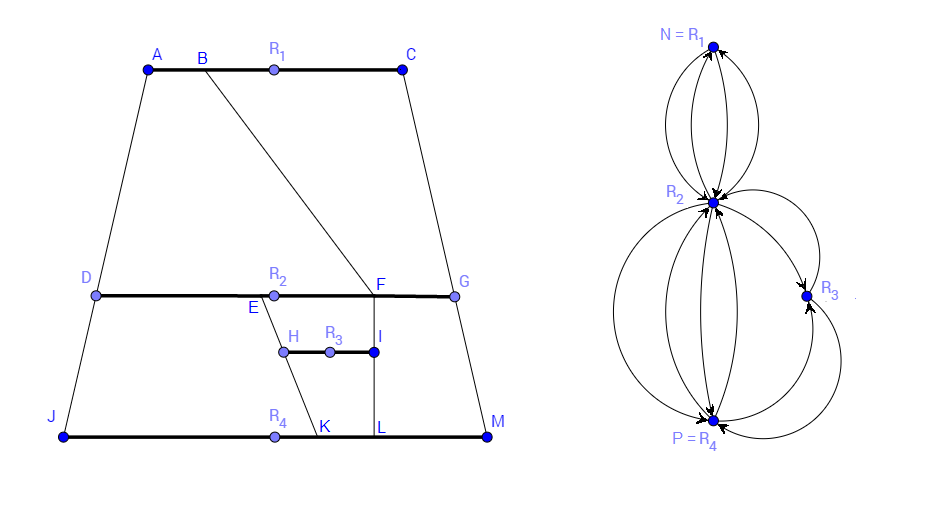}
\caption{Замощение трапеции и его электрическая цепь, к построению \ref{popo}}\label{traa}
\end{figure}

В дальнейшем рассуждении (как и во всей работе) мы говорим о трапециях, все основания которых параллельны. Для удобства, будем считать, что их основания расположены горизонтально. Как следствие, мы можем говорить о <<нижнем>> и <<верхнем>> основании любой трапеции замощения.

\bigskip

\textbf{Общий план доказательства теоремы \ref{kenyon_big}}.

\begin{enumerate}
\item По замощению трапеции высоты $1$ другими трапециями, мы строим электрическую цепь, веса рёбер которой равны отношениям оснований к высоте для трапеций замощения (см. определение \ref{popo} и лемму \ref{odd}).

\item Показываем, что в цепи существует потенциал $w$, такой что высота каждой трапеции замощения является разностью значений $w$ на некоторых вершинах цепи (предложение \ref{good_w} и лемму \ref{odd}).

\item Доказываем, что потенциал в произвольной электрической цепи единственен и выражается через веса рёбер рационально с рациональными коэффициентами (лемма \ref{ratpot}).

\end{enumerate}

Из 1-3 почти напрямую будет следовать утверждение теоремы.
\begin{defi}\label{popo} Пусть дано замощение трапеции трапециями с горизонтальными основаниями. Тогда \emph{цепь, соответствующая данному замощению} строится следующим образом.

1) Множество вершин графа -- это множество середин всех разрезов замощения (см. $R_1,R_2,R_3,R_4$ на рис. \ref{traa}).

2) Выделенными вершинами будут $P$ -- середина нижнего основания и $N$ -- середина верхнего основания замощаемой трапеции.

3) Множество рёбер графа строим, последовательно добавляя рёбра (вначале рёбер нет). Для этого в произвольном фиксированном порядке рассмотрим все трапеции замощения.

Пусть мы рассматриваем трапецию $T$. Обозначим длину её нижнего основания через $b_1$, верхнего -- через $b_2$. Обозначим середины разрезов, содержащих эти основания, через $B_1$ и $B_2$ соответственно. Обозначим высоту трапеции $T$ через $h$. Тогда добавим в граф ребро из $B_1$ в $B_2$ весом $b_1/h$ и ребро из $B_2$ в $B_1$ весом $b_2/h$.

Таким образом по замощению мы получили ориентрованный граф с взвешенными рёбрами.
\end{defi}

На рисунке \ref{traa} можно увидеть построенную указанным способом цепь для четырёх разрезов (как следствие, четырёх вершин) и шести трапеций замощения (как следствие, 12-ти ориентированных рёбер). Чтобы не рисовать получившийся граф поверх замощения, точки $R_1,R_2,R_3,R_4$ изображены дважды.

\begin{pre}\label{is}
Для любого замощения трапеции трапециями, цепь, соответствующая данному замощению (определение \ref{popo}) действительно является электрической цепью.
\end{pre}

\begin{proof}
Из определения \ref{popo} очевидно, что веса рёбер положительны. Тогда, по определению \ref{zep}, остаётся доказать, что из произвольной вершины $K$ есть пути в $N$ и в $P$. Здесь доказывается, что есть путь в $P$, так как для $N$ рассуждение аналогично.

Доказательство от противного: пусть из $K$ нет пути в $P$. Тогда обозначим через $Q \not = P$ самую низкую (одну из самых низких) вершину графа, такую что из $K$ есть путь в $Q$.

Пусть $Q_1Q_2 \supset Q$ -- разрез, содержащий $Q$. Так как он не является нижним основанием замощаемой трапеции, то существует трапеция замощения, верхнее основание которой принадлежит $Q_1Q_2$. Нижнее основание такой трапеции, соответственно, принадлежит некоторому разрезу $L_1L_2$ с серединой $L$. Но в этом случае:

1) Из $Q$ в $L$ есть ребро, порождённое рассмотренной трапецией. Как следствие, из $K$ есть путь в $L$.

2) Точка $L$ строго ниже, чем точка $Q$.

Это противоречит тому, что $Q$ -- самая низкая вершина, до которой есть путь из вершины $K$.

Достигнуто противоречие, значит путь из $K$ в $P$ существует.
\end{proof}

\begin{pre}
\label{good_w}
Возьмём произвольное замощение трапеции трапециями с горизонтальными основаниями. Определим на вершинах цепи, соответствующей этому замощению, функцию $w$ как расстояние от данной вершины до прямой, содержащей нижнее основание замощаемой трапеции.

Тогда $w$ является потенциалом в этой цепи.
\end{pre}

\begin{proof}
Определение потенциала \ref{pot} состоит из двух пунктов. Проверим их по очереди.
\begin{enumerate}
\item
Вершины $P$ и $N$ -- это середины нижнего и верхнего оснований трапеций. Для них очевидно, что $w(N) = 1, w(P)=0$.
\item
Пусть $x,y$ -- вершины цепи и середины разрезов $X_1X_2, Y_1Y_2$ соответственно. Рассмотрим выражение $[xy](w(x)-w(y))$, как в определении \ref{pot} (я опускаю индекс $i$, так как для кратных рёбер рассуждение аналогично).

Если $X$ и $Y$ соединены ребром, то, по определению \ref{popo}, это ребро соответствует некоторой трапеции $T$ с основаниями на $X_1X_2, Y_1Y_2$. Если обозначить длины оснований трапеции $T$ через $b_x, b_y$, а высоту через $h$, то, опять же, по определению \ref{popo}, вес $[xy]$ будет равняться $b_x/h$.

С другой стороны, $w(x)-w(y)$ -- это разность расстояний от $x$ и $y$ до прямой, содержащей нижнее основание замощённой трапеции. Как следствие, это высота трапеции $T$ с точностью до знака, то есть $\pm h$.

Таким образом, $[xy](w(x)-w(y)) = \pm (b_x/h)h = \pm b_x$. Нетрудно заметить, что знак, с которым будет взят $b_x$, будет зависеть от того, лежала ли вершина $y$ выше или ниже, чем вершина $x$.

Если мы просуммируем выражения $[xy](w(x)-w(y))$ по всем $y$, таким что из $x$ в $y$ есть ребро и $y$ лежит ниже, чем $x$, то получится сумма верхних оснований лежащих на $X_1X_2$ для всех трапеций, одно основание которых лежит на $X_1X_2$, а второе ниже чем $X_1X_2$. Эта сумма как раз равна длине $X$.

Если просуммировать те же выражения, но для $y$ более высоких, чем $x$, получится длина $X_1X_2$, взятая с минусом. Поэтому общая сумма будет равна нулю, как и требуется в определении.

\end{enumerate}

В качестве примера, опять возьмём рисунок \ref{traa}. Из вершины $R_2$ исходит пять рёбер -- два в вершину $R_1$, два в вершину $R_4$ и одно в вершину $R_3$. При этом только вершины $R_4$ и $R_3$ находятся ниже, чем $R_2$.

Ребру, идущему в $R_3$, соответствует трапеция $EFIH$. Выражение $[R_2R_4][w(R_2)-w(R_4)]$ будет равняться длине основания $EF$. Аналогичные выражения от рёбер, идущих в $R_3$, равняются длинам оснований $DE$ и $FG$. Поэтому в сумме три выражения дают длину разреза $DG$.

\end{proof}

Прежде чем доказывать общие факты об электрической цепи, сгруппируем предложения выше в одну лемму, которую потом будет удобнее использовать.

\begin{lem}\label{odd}
Для произвольного замощения трапеции трапециями с горизонтальными основаниями существует электрическая цепь c определённым на ней потенциалом, удовлетворяющая двум условиям:

1) Вес каждого ребра цепи равен отношению основания некоторой трапеции замощения к её высоте.

2) Высота каждой трапеции замощения равна разности потенциалов в двух точках цепи.
\end{lem}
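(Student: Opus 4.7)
План доказательства сводится к тому, чтобы объединить уже установленные факты: лемма \ref{odd} -- это, по сути, прямая компиляция предложений \ref{is} и \ref{good_w} вместе с простыми наблюдениями, которые содержатся уже в самом определении \ref{popo}. Никакого нового содержания в лемме нет, она нужна лишь для удобной формулировки результата, на который будут ссылаться последующие разделы.

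Действовать я бы стал так. Взяв произвольное замощение трапеции трапециями с горизонтальными основаниями, сначала по определению \ref{popo} построю соответствующий ему ориентированный взвешенный граф; предложение \ref{is} гарантирует, что этот граф действительно является электрической цепью в смысле определения \ref{zep}. Затем на множестве её вершин определю функцию $w$ как расстояние от вершины до прямой, содержащей нижнее основание замощаемой трапеции; по предложению \ref{good_w} эта функция -- потенциал.

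Остаётся убедиться в двух условиях из формулировки. Свойство 1) немедленно следует из конструкции цепи: каждое ребро введено по определению \ref{popo} ровно для одного из оснований какой-то трапеции замощения $T$ и имеет вес $b/h$, где $b$ -- длина этого основания, а $h$ -- высота $T$; других рёбер в цепи нет. Свойство 2) вытекает из выбора $w$: для произвольной трапеции замощения $T$ высоты $h$ середины разрезов, содержащих её нижнее и верхнее основания, являются вершинами цепи (по пункту 1 определения \ref{popo}), а разность значений $w$ на них по самому определению $w$ равна $\pm h$.

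Существенных препятствий я не предвижу -- вся содержательная работа уже проделана в предложениях \ref{is} и \ref{good_w}. Единственное место, где следует быть аккуратным, -- это корректно сослаться на то, что середины разрезов, содержащих основания любой трапеции замощения $T$, попадают в множество вершин цепи (это именно пункт 1 определения \ref{popo}); без такой ссылки фраза ``разность потенциалов в двух точках цепи'' во втором условии была бы формально не обоснована.
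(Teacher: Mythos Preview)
Your proposal is correct and matches the paper's own proof essentially verbatim: the paper also builds the circuit via Definition~\ref{popo}, reads condition~1) off from that definition, and derives condition~2) from Proposition~\ref{good_w}. You are merely a little more explicit in citing Proposition~\ref{is} and in noting that the midpoints of the relevant cuts are vertices; this is fine and changes nothing substantive.
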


\begin{proof}
Необходимая цепь строится по определению \ref{popo}, из него же следует условие 1). Условие 2) следует напрямую из предложения \ref{good_w}.
\end{proof}

Остаётся доказать одну лемму о потенциале в произвольной электрической цепи.

\begin{lem}
\label{ratpot}
Если на электрической цепи существует потенциал, то он единственен, и его значение в любой вершине можно представить как рациональную функцию с рациональными коэффициентами от весов рёбер цепи.
\end{lem}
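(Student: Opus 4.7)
Первым делом я бы свёл задачу к конечной системе линейных уравнений. Обозначим внутренние вершины цепи (все, кроме $N$ и $P$) через $v_1,\ldots,v_m$. Применяя условие 2) определения \ref{pot} к каждой $v_i$ и перенося в правую часть слагаемые с $w(N)$ и $w(P)$, известными из условия 1), получаем систему $A\mathbf w=\mathbf b$ размера $m\times m$, у которой элементы матрицы $A$ и вектора $\mathbf b$ суть многочлены с целыми коэффициентами от весов рёбер цепи. Вся дальнейшая работа будет сводиться к анализу этой системы.

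Далее я бы доказал единственность потенциала с помощью дискретного принципа максимума. Пусть $w_1,w_2$ -- два потенциала и $u:=w_1-w_2$. Тогда $u(N)=u(P)=0$, и для каждой внутренней вершины $x$ выполнено $\sum_{y\ne x}\sum_i [xy]_i\bigl(u(x)-u(y)\bigr)=0$. Если $M:=\max_x u(x)>0$, то максимум достигается на некоторой внутренней вершине $x_0$; поскольку все слагаемые $[x_0 y]_i\bigl(u(x_0)-u(y)\bigr)$ неотрицательны и в сумме дают ноль, заключаем, что $u(y)=M$ для каждого $y$, в которое ведёт ребро из $x_0$. Повторяя рассуждение, получаем $u\equiv M$ на всех вершинах, достижимых из $x_0$ по ориентированному пути. Но по определению \ref{zep} среди них есть $P$, где $u=0\ne M$ -- противоречие. Симметрично доказывается $\min u\ge 0$, откуда $u\equiv 0$.

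Из единственности следует, что однородная система $A\mathbf w=\mathbf 0$ имеет лишь нулевое решение, то есть $\det A\ne 0$. Тогда по формулам Крамера $w(v_i)=\det A_i/\det A$, где $A_i$ получается из $A$ заменой $i$-го столбца на $\mathbf b$. Оба определителя -- многочлены с целыми коэффициентами от весов рёбер, так что $w(v_i)$ -- рациональная функция от этих весов с рациональными (даже целыми) коэффициентами, что и требовалось.

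Основной трудностью мне представляется именно аккуратное обоснование принципа максимума на ориентированном графе: в определении \ref{pot} суммирование ведётся только по исходящим из $x$ рёбрам, поэтому стандартный неориентированный аргумент применить нельзя. Существенно используется именно то, что по определению \ref{zep} из каждой вершины существует ориентированный путь до $P$ (и до $N$).
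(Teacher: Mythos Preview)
Your proposal is correct and follows essentially the same route as the paper: both prove uniqueness via a discrete maximum principle for the difference $u=w_1-w_2$ (propagating the extremum along outgoing edges and using the guaranteed oriented path to one of the terminals $N,P$), and both then observe that the potential is the unique solution of a square linear system whose coefficients are the edge weights together with $0$ and $1$, hence a rational function with rational coefficients of those weights. The only cosmetic differences are that the paper phrases the propagation as induction along a single fixed path from $x_0$ to $N$, whereas you speak of the whole reachable set, and the paper appeals to Gaussian elimination rather than Cramer's rule to exhibit the rational dependence; neither difference is substantive.
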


\begin{proof} Докажем первое утверждение леммы \ref{ratpot}. Пусть у цепи есть два потенциала. Обозначим их через $w$ и $v$. Рассмотрим функцию $f = w-v$.

Из определения потенциала следует, что для неё верно $$\sum_{y\not = x}\sum_{i=1}^{c_{xy}}[xy]_i(f(x)-f(y))=0$$ при $x$ не равных $N$ и $P$.

Докажем, что $f$ достигает максимума в вершине $N$ или $P$. Пусть $f$ достигла максимума в точке $x\not\in \{N, P\}$. Рассмотрим ориентированный путь $x_0,..,x_n$ от $x=x_0$ до $N=x_n$. Докажем по индукции, что для $k=0,..,n$ выполнено $f(x_k)=f(x)$ (чтобы получить $f(N)=f(x)$ и достижение максимума в $N$). База очевидна, остаётся переход.

При $k < n$ верно равенство $$\sum_{y\not = x_k}\sum_{i=1}^{c_{x_ky}}[x_ky]_i(f(x_k)-f(y))=0.$$ Так как $f(x_k)=f(x)$ максимален, то все слагаемые слева неотрицательны, как следствие, равны нулю. Так как между $x_k$ и $x_{k+1}$ есть ребро, то необходимо чтобы $f(x_k)-f(x_{k+1})=0$, то есть $f(x_{k+1})=f(x)$. Переход доказан, отсюда $f(N)=f(x)$ и максимум достигнут в $N$. Аналогичное рассуждение доказывает, что минимум $f$ достигается на $N$ или на $P$.

Однако, из условия 1) в определении \ref{pot} следует, что $f(N)=f(P)=0$. А если минимум и максимум равны $0$, то функция $f$ равна нулю на всей цепи. Значит, взятые изначально $w$ и $v$ совпадают. Таким образом, потенциал единственен.

Теперь докажем второе утверждение леммы. 

Функцию на множестве мощности $n$ можно представить набором из $n$ чисел: $w(x_1), ... , w(x_n)$. Чтобы этот набор соотвествовал некоторому потенциалу цепи, на него надо наложить несколько условий следующего вида:
\begin{itemize}

\item $w(x_i) = 1$ для некоторых $i$,

\item$w(x_i) = 0$ для некоторых $i$

\item$\Sigma_{j\not=i}\Sigma_k^{c_{x_ix_j}}[x_ix_j]_k(w(x_i)-w(x_j))=0$ для некоторых $i$.
\end{itemize}
Заметим, что все условия такого вида -- линейные уравнения относительно $w(x_i)$, то есть потенциалом является любое решение некоторой системы линейных уравнений. Коэффициенты этих уравнений -- веса рёбер графа, а также $0$ и $1$.

Как мы знаем из первой части леммы, потенциал в цепи всего один, а значит, и решение у системы одно. Единственное решение системы можно получить методом Гаусса. При вычислении методом Гаусса задействуются только четыре основным арифметических действия, поэтому решение системы будет значением рациональной функции с рациональными коэффициентами от весов рёбер графа. \end{proof}

\begin{proof}[Доказательство теоремы \ref{kenyon_big}] Подействуем на указанное в теореме замощение гомотетией так, чтобы высота замощённой трапеции стала равна 1. Отношения оснований трапеций замощения к высотам всё ещё лежат в поле $\mathbb K$.

Возьмём цепь, описанную в лемме \ref{odd}. По той же лемме, веса ребёр в этой цепи будут лежать в $\mathbb K$. Тогда, по лемме \ref{ratpot}, значения потенциала тоже будут лежать в $\mathbb K$. Из свойств цепи, высоты всех трапеций замощения лежат в $\mathbb K$. Зная, что в $\mathbb K$ также лежат отношения оснований к высотам, получаем, что все основания трапеций замощения тоже лежат в $\mathbb K$.

Оба основания замощаемой трапеции представляются в виде суммы оснований замощающих трапеций, поэтому тоже лежат в $\mathbb K$. Высота равна единице, поэтому отношения оснований к высоте замощённой трапеции тоже лежат в $\mathbb K$, что и требовалось доказать.
\end{proof}

\section{Основные геометрические определения и связанные с ними леммы}\label{notion}

\begin{defi}\label{stan}
\emph{Стандартная трапеция} -- это трапеция единичной высоты с горизонтальными основаниями и углами $45^o$ при нижнем основании. Стандартную трапецию со средней линией $a$ для краткости обозначим через $t(a)$.\end{defi}
\begin{defi}\emph{Стандартный параллелограмм} -- это параллелограмм единичной высоты с двумя горизонтальными основаниями и левым нижним углом $45^o$ при основании. Стандартный параллелограмм с основанием $a$ обозначим через $p(a)$.
\end{defi}

В некоторых случаях я буду писать <<стандартная фигура>> -- это следует понимать как стандартный параллелограмм или стандартную трапецию.

Как нетрудно заметить, средняя линия стандартной трапеции определяет её однозначно -- по ней можно высчитать оба основания. Аналогично, основание стандартного параллелограмма однозначно задаёт параллелограмм.

\bigskip
В работе мне неоднократно понадобится функция, связывающая среднюю линию стандартной трапеции с отношением оснований. Поэтому я сформулирую её свойства заранее.

\textbf{Обозначение.} Обозначим через $G(x)$ функцию, определённую на интервале $(1,+\infty)$ как $G(x):=(x-1)/(x+1)$.

\begin{lem}[свойства функции G]\label{glem}

1) Функция $G$ -- биекция между интервалами $(1,+\infty)$ и $(0,1)$. В частности, она обратима и $G^{-1}(x)=(1+x)/(1-x)$.

2) $G(x)$ возрастает.

3) $G(x)$ -- рациональная функция с рациональными коэффициентами; как следствие, если $x$ принадлежит полю $\mathbb K$, то $G(x)$ принадлежит тому же полю $\mathbb K$.

4) Если стандартная трапеция имеет среднюю линию $a$, то её отношение оснований (меньшего к большему) равно $G(a)$. Также верно обратное.
\end{lem}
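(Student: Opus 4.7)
План доказательства. Пункты (1)--(3) сводятся к прямым алгебраическим проверкам, а пункт (4) следует из геометрии определения \ref{stan}.

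Сначала я бы переписал $G(x) = 1 - 2/(x+1)$. Из этого представления пункт (2) вытекает сразу: $2/(x+1)$ строго убывает на $(1,+\infty)$, значит, $G$ строго возрастает. Граничные значения $\lim_{x\to 1^+} G(x) = 0$ и $\lim_{x\to +\infty} G(x) = 1$ вместе с непрерывностью и монотонностью дают биекцию $(1,+\infty)\to (0,1)$ (пункт (1)). Формула обратной функции получается решением уравнения $y = (x-1)/(x+1)$ относительно $x$: $y(x+1) = x-1$, откуда $x(1-y) = 1+y$ и $x = (1+y)/(1-y)$. Пункт (3) очевиден, поскольку $G$ есть отношение двух многочленов с целыми коэффициентами, а знаменатель $x+1$ не обращается в нуль при $x > 1$, поэтому для $x\in\mathbb K$ имеем $G(x)\in\mathbb K$.

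Пункт (4) -- единственный, требующий геометрии. План такой: по определению \ref{stan} стандартная трапеция имеет единичную высоту, горизонтальные основания и углы $45^o$ при нижнем основании. Из $\tan 45^o = 1$ и высоты $1$ следует, что каждая боковая сторона даёт горизонтальный сдвиг ровно в $1$, поэтому если $b_1$ -- нижнее (большее) и $b_2$ -- верхнее (меньшее) основания, то $b_1 - b_2 = 2$. В сочетании с определением средней линии $a = (b_1+b_2)/2$ это даёт $b_1 = a+1$, $b_2 = a-1$; условие $b_2 > 0$ как раз отвечает области определения $a > 1$. Отношение меньшего основания к большему равно $(a-1)/(a+1) = G(a)$, что и требовалось. Обратное утверждение (однозначное восстановление $a$ по отношению оснований) сразу следует из биективности $G$ из пункта (1).

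Серьёзных препятствий не ожидается: вся лемма состоит из элементарных алгебраических тождеств и одного стандартного геометрического подсчёта. Единственное, на что стоит обратить внимание,-- это аккуратно увязать область определения $a > 1$ с условием $b_2 > 0$ невырожденности трапеции.
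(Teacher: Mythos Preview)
Your proposal is correct and follows essentially the same approach as the paper: the paper's own proof simply declares the algebraic facts ``очевидны'' and for the geometric part notes that unit height and $45^\circ$ angles force the bases to be $a-1$ and $a+1$. Your write-up merely makes these claimed-obvious steps explicit (the rewriting $G(x)=1-2/(x+1)$, the solution for the inverse, the $b_1-b_2=2$ computation), so there is no substantive difference in strategy.
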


\begin{proof}
Все алгебраические факты очевидны. С геометрией тоже просто: стандартные трапеции имеют единичную высоту и углы $45^o$ при основании, поэтому при средней линии в $a$, её основания будут равны как раз $(a-1)$ и $(a+1)$.
\end{proof}

Теперь введём леммы, которые помогут нам избавиться от геометрическиех построений и далее оперировать исключительно числовыми понятиями.

\begin{lem} \label{draw}
Пусть $A,B,C$ -- стандартные фигуры, и $C$ состоит из двух фигур, гомотетичных $A$ и $B$ соответственно. Тогда существуют числа $a,b$, для которых выполнено одно из следующих условий 1)-5):

1) $A = p(a), B = p(b), C = p(a+b)$.

2) $A = p(a), B = p(b), C = p(ab/(a+b))$

3) $A = t(a), B = t(b), C = p(a+b)$

4) $A = p(a), B = t(b), C = t(a+b)$

5) $A = t(a), B = t(b), C = t(G^{-1}(G(a)G(b))) = t((ab+1)/(a+b))$
\end{lem}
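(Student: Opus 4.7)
The plan is to classify, up to swapping $A$ and $B$, the possible geometric configurations in which two scaled copies of standard figures tile a standard figure $C$, and to extract the parameters $a,b$ in each.

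I would first observe that every standard figure has horizontal top and bottom edges and slanted edges at $\pm 45^\circ$, and that homothety preserves the set of these three line directions (allowing a negative coefficient implements the $180^\circ$ rotation needed in case (3); this leaves a parallelogram unchanged but flips a trapezoid). The common boundary between the two pieces is a full side of each, hence a segment of slope $0$, $+1$, or $-1$. A short check on endpoints rules out cuts that terminate on a slanted side of $C$ (such a cut would leave a triangular piece, which is not a standard figure), so the only configurations are a horizontal cut running between the two slanted sides of $C$, or a $\pm 45^\circ$ cut running from the bottom edge to the top edge of $C$.

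Then I would split into sub-cases on the shape of $C$ and the direction of the cut. When $C = p(c)$: a $+45^\circ$ cut is parallel to both slanted sides and splits $C$ into two side-by-side parallelograms of bases $a,b$ with $c = a+b$ (case 1); a $-45^\circ$ cut is transverse to both slanted sides and splits $C$ into two trapezoids, one standard and one obtained from $t(b)$ by a $180^\circ$ rotation, again with $c = a+b$ after reading off the middle lines (case 3); a horizontal cut at height $h$ gives two stacked parallelograms whose bases $ah$ and $b(1-h)$ must both equal $c$, yielding $h = b/(a+b)$ and $c = ab/(a+b)$ (case 2). When $C = t(c)$: a $\pm 45^\circ$ cut is parallel to one slanted side and transverse to the other, producing one parallelogram and one trapezoid with $c = a+b$ (case 4, the two orientations giving the same case after swapping $A$ and $B$); a horizontal cut at height $h$ gives two trapezoids, and matching their middle lines at heights $h$ and $1-h$ gives $ah = c+1-h$ and $b(1-h) = c-h$.

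The main obstacle is this last sub-case: eliminating the auxiliary height $h$ between the two middle-line equations. Solving each for $h$ gives $h = (c+1)/(a+1) = (c-b)/(1-b)$; cross-multiplying and simplifying collapses to the symmetric relation $c(a+b) = ab + 1$, so $c = (ab+1)/(a+b)$ (case 5). The final identification $(ab+1)/(a+b) = G^{-1}(G(a)G(b))$ is then a direct computation from $G(x) = (x-1)/(x+1)$ and $G^{-1}(x) = (1+x)/(1-x)$ recorded in Lemma \ref{glem}; every other case reduces to straightforward linear matching of bases.
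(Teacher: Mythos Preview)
Your proposal is correct and follows essentially the same case-by-case enumeration as the paper: classify the cut direction, read off the parameters, and verify the formulas. The only notable difference is in case~5): the paper obtains $c=(ab+1)/(a+b)$ by observing directly that stacking trapezoids multiplies their base ratios, so $G(c)=G(a)G(b)$, whereas you introduce the cut height $h$ and eliminate it between the two middle-line equations --- both routes are equally valid and arrive at the same identity.
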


\begin{figure}[h!]
\centering
\includegraphics[scale=0.8]{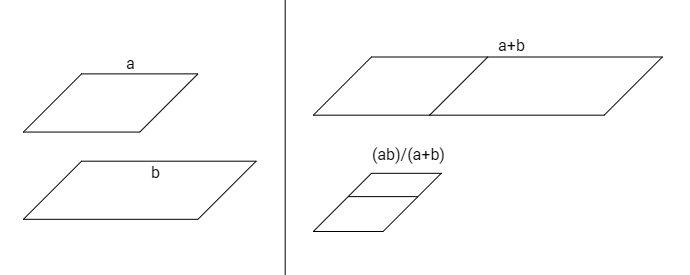}
\caption{К доказательству леммы \ref{draw} -- выполнены условия 1) и 2)}\label{k1}
\end{figure}

\begin{figure}[h!]
\centering
\includegraphics[scale=0.8]{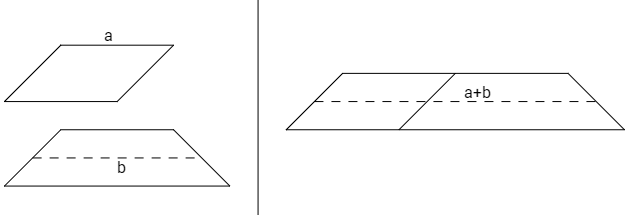}
\caption{К доказательству леммы \ref{draw} -- выполнено условие 4)}\label{k2}
\end{figure}

\begin{figure}[h!]
\centering
\includegraphics[scale=0.8]{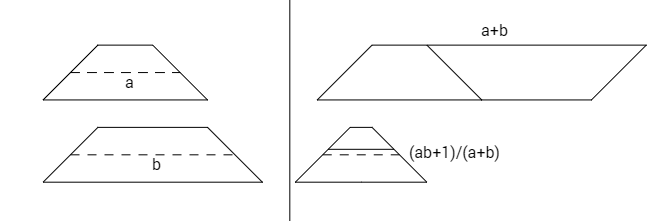}
\caption{К доказательству леммы \ref{draw} -- выполнены условия 3) и 5)}\label{k3}
\end{figure}

\begin{proof}

Лемма проверяется полным перебором. Из гомотетий двух стандартных параллелограммов можно составить стандартный параллелограмм двумя способами, из трапеции и параллелограмма -- трапецию, из двух трапеций -- параллелограмм или трапецию. Все эти способы продемонстрированы на рисунках \ref{k1}-\ref{k3}. При каждом способе стыковки выполняется одно из условий.

Возможно, стоит пояснить, откуда берутся формулы. При выполнении условий 1),3) или 4) мы не производим никаких гомотетий, а просто составляем фигуры вместе -- и сумма средних линий равняется средней линии суммы. При выполнении условия 2) чуть сложнее: над параллелограммами нужно провести гомотетии с коэффициентами $b/(a+b)$ и $a/(a+b)$, чтобы их суммарная высота стала равна единице.

При выполнении условия 5) я записал две формулы, по которым можно вычислить среднюю линию трапеции $C$. Первая демонстрирует применение функции $G$ -- если поставить одну трапецию на другую, их отношение оснований новой трапеции будет произведением их отношений оснований, значит $G(c)=G(a)G(b)$ (где $c$ -- средняя линия трапеции $C$). Формула $c=(ab+1)/(a+b)$ -- это упрощённая формула $G(c)=G(a)G(b)$.
\end{proof}

Так как описывать множества замостимых трапеций мы тоже намерены числами, введём соответствующие обозначения.

\begin{defi}
Пусть $\alpha\subset (1,+\infty)$ -- произвольное множество. Обозначим через $T(\alpha)$ множество таких чисел $b$, что трапеция $t(b)$ замощается трапециями, гомотетичными $t(a_1),..,t(a_k)$, для $a_i\in \alpha$.

Аналогично, через $P(\alpha)$ обозначим множество чисел $b$, таких что параллелограмм $p(b)$ замощается трапециями, гомотетичными $t(a_1),..,t(a_k)$, для $a_i\in \alpha$.
\end{defi}

\begin{lem}[свойства множества $T(\alpha)$ и $P(\alpha)$]\label{base}

\ 

1) $a\in P(\alpha), b\in P(\alpha) \Rightarrow a+b\in P(\alpha)$.

2) $a\in P(\alpha), b\in P(\alpha) \Rightarrow ab/(a+b)\in P(\alpha)$.

3) $a\in T(\alpha), b\in T(\alpha)\Rightarrow a+b\in P(\alpha)$.

4) $a\in P(\alpha), b\in T(\alpha)\Rightarrow a+b\in T(\alpha)$.

5) $a\in T(\alpha), b\in T(\alpha)\Rightarrow (ab+1)/(a+b)\in T(\alpha)$.

6) $a\in P(\alpha), m\in\mathbb Q, m > 0 \Rightarrow am\in P(\alpha)$.

7) Пусть $a\in T(\alpha), \varepsilon > 0$. Тогда существует $c$, которое выражается через $a$ рациональной функцией с рациональными коэффициентами, такое что $c\in T(\alpha), c < 1+\varepsilon$.
\end{lem}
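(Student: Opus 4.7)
\bigskip

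\noindent\textbf{План доказательства.}
Свойства 1)--5) -- это по существу переформулировки леммы \ref{draw}. Для каждого из пяти случаев: взяв замощения стандартных фигур $p(a), p(b)$ (или $t(a), t(b)$) трапециями, гомотетичными $t(a_i)$ с $a_i\in\alpha$, я сложу их по соответствующей конструкции леммы \ref{draw} и получу замощение стандартной фигуры с нужным параметром ($a+b$, $ab/(a+b)$, $(ab+1)/(a+b)$ и т.\,д.). В случаях 2) и 5) дополнительно замечу, что гомотетия замощения снова даёт замощение теми же гомотетиями $t(a_i)$, поэтому предварительное масштабирование $p(a)$ и $p(b)$ перед стыковкой допустимо.

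Свойство 6) я выведу по индукции из 1) и 2). Многократное применение свойства 1) даёт $na\in P(\alpha)$ для всех натуральных $n$. Чтобы получить $a/n$, применю свойство 2) с $x=a$ и $y=a/(n-1)$: прямая проверка даёт $xy/(x+y)=a/n$, так что по индукции $a/n\in P(\alpha)$ для всякого натурального $n$. Объединяя оба факта, получу $(m/n)a\in P(\alpha)$ для любого положительного рационального $m/n$.

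Свойство 7) я планирую получить итерацией свойства 5) при $b=a$, что даёт отображение $f(x)=(x^2+1)/(2x)$, переводящее $T(\alpha)\cap(1,+\infty)$ в себя. Сначала проверю, что $1<f(x)<x$ при $x>1$: обе оценки сводятся к элементарным неравенствам $(x-1)^2>0$ и $x^2>1$. Значит, последовательность $a_0=a$, $a_{k+1}=f(a_k)$ монотонно убывает и ограничена снизу единицей, так что она сходится к пределу $L\ge 1$, удовлетворяющему $L=(L^2+1)/(2L)$; отсюда $L=1$. Тогда при достаточно большом $k$ будет $a_k<1+\varepsilon$, и искомым $c$ можно взять $a_k$; поскольку $f$ рациональна с рациональными коэффициентами, такой же окажется и зависимость $c$ от $a$.

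Главным препятствием я ожидаю именно свойство 7): нужно правильно угадать итерируемое выражение и провести пусть простое, но содержательное рассуждение о сходимости. Остальные свойства сводятся к практически механическим применениям леммы \ref{draw} и индукции.
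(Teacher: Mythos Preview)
Your proposal is correct and, for properties 1)--5) and 7), matches the paper's argument almost verbatim: the paper also cites the five constructions of Lemma~\ref{draw} for 1)--5) and iterates property~5) with $b=a$ for 7). Your iterate $f(x)=(x^2+1)/(2x)$ is the right one (the paper writes $(a^2-1)/(2a)$, which is a typo; applying property~5) with $a=b$ gives $G^{-1}(G(a)^2)=(a^2+1)/(2a)$, and only this value stays above $1$ for all $a>1$).

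The one genuine difference is property~6). The paper gives a direct geometric picture: tile a $k\times n$ grid of copies of $p(a)$ to obtain (after homothety) $p(am)$ with $m=k/n$. You instead derive~6) purely from~1) and~2) by induction, first getting $na\in P(\alpha)$ from repeated use of~1), then $a/n\in P(\alpha)$ from~2) applied to $x=a$, $y=a/(n-1)$. Your route is a bit more elegant in that it needs no new construction beyond Lemma~\ref{draw}, while the paper's grid picture is more immediately visual. Both are short and valid.
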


\begin{figure}[h!]
\centering
\includegraphics[scale=0.8]{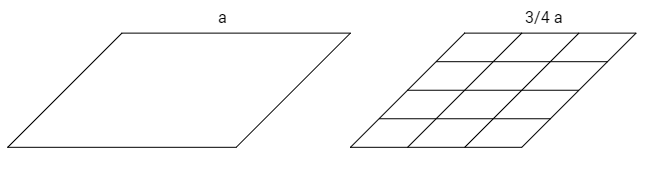}
\caption{К доказательству леммы \ref{base} -- свойство 6}\label{k6}
\end{figure}

\begin{figure}[h!]
\centering
\includegraphics[scale=0.8]{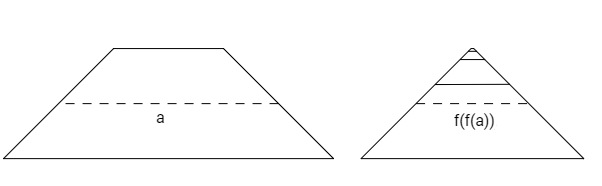}
\caption{К доказательству леммы \ref{base} -- свойство 7}\label{k7}
\end{figure}

\begin{proof}
Первые пять свойств напрямую следуют из пяти конcтрукций, предъявленных в рисунках \ref{k1}-\ref{k3}. Приведу пример: если $a\in P(\alpha), b\in P(\alpha)$, то $p(a),p(b)$ замостимы трапециями, гомотетичными $t(a_1),..,t(a_k)$, для $a_i\in \alpha$. Разделим $p(a+b)$ на параллелограммы $p(a)$ и $p(b)$ как на рис. \ref{k1} и замостим каждый из них. Получим замощение параллелограмма $p(a+b)$, значит $a+b\in P(\alpha)$.

Шестое и седьмое свойство, на самом деле, следуют из предыдущих, однако доказывать их через предыдущие просто неудобно. Поэтому проще предъявить геометрические конструкции.

6) Рациональное число $m$ -- это дробь вида $k/n$. Выложив параллелограммами сетку $k$ на $n$ (см. рис. \ref{k6}) мы получим параллелограмм с отношением основания к высоте равным $am$. После гомотетии это как раз будет $p(am)$.

7) Сложим основаниями две трапеции, гомотетичные $t(a)$. Затем будем складывать получаемые так трапеции много раз, образуя пирамиду из $2^n$ трапеций (см. рис. \ref{k7}). Средняя линия полученной трапеции устремляется к единице при увеличении числа трапеций в конструкции, так как отношение оснований стремится к нулю. Остаётся показать, что средняя линия полученной трапеции будет значением рациональной функции от $a$ с рациональными коэффициентами.

Если трапецию $t(a)$ сложить с гомотетичной ей трапецией как на рисунке \ref{k7}, получится $t((a^2-1)/(2a))$ (это частный случай применения свойства 5). Если обозначить $(a^2-1)/(2a)$ через $f(a)$, то при сложении $2^n$ гомотетий данной трапеций $t(a)$ мы получим
$t(f(f(...f(a)...)))$, где $f$ применяется $n$ раз. Искомое число $c = f(f(...f(a)...))$; оно сколь угодно близко к единице (если в пирамиде было достаточно трапеций), и оно рационально выражено через $a$.
\end{proof}

\section{Доказательство теоремы \ref{my_big} о рациональных средних линиях}\label{me_proof}

\textbf{Теорема \ref{my_big} }(о рациональных средних линиях)

\emph{Пусть $A, B$ -- трапеции единичной высоты с параллельными основаниями, углами $45^o$ при основании и средними линиями $a,b,$ соответственно. Пусть $a\in \mathbb Q$. Тогда трапециями, гомотетичными $A$, можно замостить трапецию $B$, если и только если $b\in\mathbb Q$.}

\begin{proof}

При всех шагах доказательства стоит помнить, что $a > 1, b >1$.

1) В сторону "только если".

Отношение каждого из оснований трапеции $A$ к высоте рационально. Из теоремы \ref{kenyon_big} следует, что любая замощаемая ей трапеция (в частности, $B$), тоже будет иметь рациональные отношения оснований к высоте. Поэтому и средняя линия $b$ будет рациональна.

\bigskip

2) В сторону "если".

Достаточно доказать, что $b\in T(a)$. Рассмотрим следующую цепочку утверждений, использующих свойства множества $T(a)$ из леммы \ref{base}.

Во-первых $a\in T(a)$, т.к. $A$ тривиально замостима сама собой.

По свойству 7) множество $T(a)$ содержит некоторое рациональное число $c$, меньшее $b$.

По свойству 3) выполнено $a+a = 2a\in P(a)$.

Число $(b-c)/2a$ рационально и положительно. По свойству 6) выполнено $2a(b-c)/2a = (b-c)\in P(a)$.

По свойству 4) выполнено $c+(b-c)=b\in T(a)$, что и требовалось доказать.\end{proof}

\section{Вспомогательные леммы для доказательства теоремы \ref{twodim} о средних линиях из $\mathbb Q[\sqrt d]$}\label{hlems}

В следующих параграфах мы работаем в поле $\mathbb Q[\sqrt d]$, где $d > 0, d\in\mathbb Q, \sqrt d\not\in\mathbb Q$.

\begin{defi}
Определим на числах из $\mathbb Q[\sqrt d]$, строго больших единицы, функцию $h(x)$ со значениями в $\mathbb Q\cup\{\infty\}$ следующим образом. Пусть $G(x) = a+b\sqrt d$ для некоторых рациональных $a, b$. Тогда положим $h(x) = a/b$ eсли $b\not = 0$; и $h(x)=\infty$, иначе.

\end{defi}

Сразу замечу, что числа, для которых $h=\infty$ -- это рациональные числа.

Основное свойство функции $h$: условие $G(x)/G(y)\in \mathbb Q$ эквивалентно условию $h(x)=h(y)$. Оно нужно по простой причине -- если у трапеций отношения оснований отличаются в рациональное число раз, то мы можем получить одну из другой, приставив ко второй трапецию с рациональным отношением оснований. Это свойство формализовано в следующей лемме.

\begin{lem}[о числах с равными $h$] \label{h1}
Пусть $x > y$ и $h(x)=h(y)$. Пусть $m\in\mathbb Q$ и $m>1$. Тогда трапециями, гомотетичными трапециям $t(x)$ и $t(m)$, можно замостить трапецию $t(y)$, иными словами, $y\in T(x,m)$.  
\end{lem}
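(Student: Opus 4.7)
The plan is to realize $t(y)$ by stacking a homothetic copy of $t(x)$ together with a homothetic copy of some standard trapezoid $t(r)$ whose midline $r$ is rational, and then to tile that $t(r)$-piece by copies of $t(m)$ using Theorem \ref{my_big}. By item 5) of Lemma \ref{draw}, stacking two standard trapezoids with midlines $u$ and $v$ produces a standard trapezoid with midline $G^{-1}(G(u)G(v))$, so the task reduces to finding $r \in \mathbb Q \cap (1,\infty)$ such that $G(x)G(r) = G(y)$.

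The first step is to observe that the hypothesis $h(x)=h(y)$ forces $G(y)/G(x) \in \mathbb Q$. Writing $G(x) = a+b\sqrt d$ and $G(y) = c+e\sqrt d$ and rationalising the denominator, the coefficient of $\sqrt d$ in the quotient is (up to the nonzero factor $a^2 - b^2 d$) exactly $ae - bc$, and this vanishes precisely when $a/b = c/e$, i.e.\ $h(x)=h(y)$; the degenerate case $b=0$ forces $e=0$ and is trivial. Since $G$ is increasing (Lemma \ref{glem}) and $x>y>1$, we have $G(x) > G(y) > 0$, so $G(y)/G(x) = k/n$ for some positive integers $k<n$. Solving $(r-1)/(r+1) = k/n$ gives the explicit value $r := (n+k)/(n-k) \in \mathbb Q \cap (1,\infty)$, which satisfies $G(r)=k/n$ and therefore $G(x)G(r) = G(y)$ by construction.

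The second step is to assemble the tiling via the closure properties of $T$. Since $m,r \in \mathbb Q$ with $m,r > 1$, Theorem \ref{my_big} (already proven) yields $r \in T(m) \subseteq T(x,m)$. Combined with the trivial $x \in T(x,m)$, property 5) of Lemma \ref{base} gives $(xr+1)/(x+r) \in T(x,m)$. By the second formula in item 5) of Lemma \ref{draw}, $(xr+1)/(x+r) = G^{-1}(G(x)G(r)) = G^{-1}(G(y)) = y$, so $y \in T(x,m)$, as required.

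I do not expect any real obstacle. The only substantive content is the algebraic equivalence between $h(x)=h(y)$ and $G(y)/G(x) \in \mathbb Q$, which is exactly what allows a rational "bridge" trapezoid $t(r)$ to mediate between $t(x)$ and $t(y)$; the rest is bookkeeping via Lemmas \ref{draw} and \ref{base} and a single appeal to Theorem \ref{my_big}. The role of the auxiliary trapezoid $t(m)$ is merely to supply arbitrarily fine rational tilings — which is why no assumption relating $m$ to $x$ or $y$ beyond rationality is needed.
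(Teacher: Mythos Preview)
Your proof is correct and follows essentially the same route as the paper: both arguments show that $h(x)=h(y)$ implies $G(y)/G(x)\in\mathbb Q$, set $r=G^{-1}(G(y)/G(x))\in\mathbb Q\cap(1,\infty)$, invoke Theorem~\ref{my_big} to get $r\in T(x,m)$, and then apply property~5) of Lemma~\ref{base} to $x$ and $r$ to obtain $y\in T(x,m)$. Your version is slightly more explicit in justifying the equivalence $h(x)=h(y)\Leftrightarrow G(y)/G(x)\in\mathbb Q$ and in writing $r=(n+k)/(n-k)$, but the underlying idea is identical.
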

\begin{proof}
Из $h(x)=h(y)$ следует, что $G(y)/G(x)\in\mathbb Q$. По лемме \ref{glem}, функция $G$ возрастает, поэтому $y<x\Rightarrow G(y)<G(x)$. Таким образом, $G(y)/G(x)$ -- рациональное положительное число, меньшее единицы.

На числах из интервала $(0,1)$ определена функция $G^{-1}$. Применив её, мы получим, что $G^{-1}(G(y)/G(x))$,  это рациональное число, большее единицы (по лемме \ref{glem}).

По теореме \ref{my_big}, трапециями, гомотетичными $t(m)$, можно замостить трапецию $t(G^{-1}(G(y)/G(x)))$, потому что у обеих трапеций рациональные средние линии. Таким образом, $G^{-1}(G(y)/G(x))\in T(x,m)$.

Воспользуемся свойством 5) из леммы \ref{base} и тем, что $x\in T(x,m)$ и $G^{-1}(G(y)/G(x))=(xy+1)/(x+y)\in T(x,m)$.

Получим $G^{-1}(G(x)G(G^{-1}(G(y)/G(x)))) = G^{-1}(G(x)G(y)/G(x)) = y\in T(x,m)$.

Что и требовалось доказать.

\end{proof}

У следующих двух лемм есть простой геометрический смысл. Вы можете заглянуть в раздел \ref{geom}, чтобы подробнее о нём узнать (хотя это не обязательно для понимания доказательства).

\begin{lem}[о большом $x$ при фиксированном $h(x)$]\label{h2}
Для любых $q\in \mathbb Q$, $N > 2, \varepsilon > 0$ существует число $x$, такое что $h(x)=q$, $x > N$ и  $|\overline x - \frac{q}{\sqrt d}| < \varepsilon$.

\end{lem}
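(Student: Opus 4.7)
Замысел -- явно параметризовать числа $x$ с $h(x)=q$ рациональным параметром и устремить его к граничному значению. По определению, условие $h(x)=q$ равносильно тому, что $G(x) = t(q+\sqrt d)$ для некоторого ненулевого $t\in\mathbb Q$. Поэтому я буду искать $x$ в виде
$$x = G^{-1}(t(q+\sqrt d)) = \frac{1+t(q+\sqrt d)}{1-t(q+\sqrt d)}$$
с рациональным параметром $t$. Для того чтобы $x > 1$, нужно $t(q+\sqrt d)\in(0,1)$, что задаёт знак $t$ и ограничивает его по модулю. По построению сразу же $x\in\mathbb Q[\sqrt d]$ и $h(x)=q$.

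Применив сопряжение, получим $\overline{G(x)} = t(q-\sqrt d)$ и $\overline x = (1+t(q-\sqrt d))/(1-t(q-\sqrt d))$. Положим $t_0 := 1/(q+\sqrt d)$; это число иррационально, так как $\sqrt d\not\in\mathbb Q$, и лежит на границе допустимого отрезка для $t$. При стремлении рационального $t$ к $t_0$ знаменатель $1-t(q+\sqrt d)$ стремится к нулю, поэтому $x\to+\infty$, тогда как знаменатель $1-t(q-\sqrt d)$ стремится к ненулевой величине $2\sqrt d/(q+\sqrt d)$, и прямой подсчёт даёт
$$\overline x \to \frac{1+(q-\sqrt d)/(q+\sqrt d)}{1-(q-\sqrt d)/(q+\sqrt d)} = \frac{2q}{2\sqrt d} = \frac{q}{\sqrt d}.$$

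Так как $t_0$ иррационально, а $\mathbb Q$ плотно в $\mathbb R$, можно выбрать рациональное $t$ в допустимом отрезке сколь угодно близкое к $t_0$ (с нужной стороны, обеспечивающей $G(x)\in(0,1)$). В силу непрерывности выражений для $x$ и $\overline x$ в окрестности $t_0$ при достаточно близком к $t_0$ значении $t$ одновременно выполнены $x>N$ и $|\overline x - q/\sqrt d|<\varepsilon$, что и требовалось.

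Содержательных препятствий я не вижу: утверждение сводится к аккуратному предельному переходу в явной параметризации. Единственная техническая тонкость -- корректно обработать случай $q+\sqrt d<0$ (возможный при очень отрицательном $q$), в котором допустимые значения $t$ отрицательны, но итоговый предел $\overline x\to q/\sqrt d$ остаётся тем же.
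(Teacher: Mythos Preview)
Your proposal is correct and follows essentially the same route as the paper. The paper writes the parameter as $t=\frac{1}{q+\sqrt d}-\delta$ (imposing that this difference lie in $\mathbb Q$) and then verifies the three conditions by explicit inequalities on $\delta$, whereas you use the equivalent parametrization $G(x)=t(q+\sqrt d)$ directly and replace the explicit estimates by a continuity/limit argument as $t\to 1/(q+\sqrt d)$; the construction and the handling of the sign of $q+\sqrt d$ are the same in both.
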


\begin{proof} Для удобства я принимаю $q+\sqrt d > 0$, в противном случае доказательство проводится аналогично.

Построим искомое число $x$ явно. Для этого мне нужно ввести переменные, через которые его удобно выразить.

Возьмём достаточно малое $\delta > 0$, для которого выполняется следующий ряд условий:
\begin{itemize}
\item $\delta(q+\sqrt d) < 1 - G(N)$
\item $\delta\sqrt d|d-q^2|<d$ 
\item $\frac{\delta (q+\sqrt d)}{d}\left|d-q^2\right| < \varepsilon$
\item $\delta < \frac{1}{q+\sqrt d}$
\item $\frac{1}{q+\sqrt d} - \delta\in \mathbb Q$

\end{itemize}
Нетрудно убедиться, что все числа, которыми мы ограничиваем $\delta$, положительны. Из плотности $\mathbb Q$ следует что мы найдём и $\delta$, удовлетворяющую последнему условию.

Я утверждаю, что условиям леммы удовлетворяет число 

$$x = G^{-1}\left(\left(q+\sqrt d\right)\left(\frac{1}{q+\sqrt d} - \delta\right)\right).$$ Проверим это.
\bigskip

Условие 0: Покажем, что $x$ определён корректно. Конкретно, проверить нужно корректность функции $G^{-1}$, определённой только на интервале $(0,1)$. Из предпоследнего условия на $\delta$ следует, что выражение в скобках больше нуля, а из $\delta > 0$ следует, что оно меньше единицы.

\bigskip
Условие 1: Докажем, что $h(x)=q$. Применив функцию $G$, получаем $G(x) = (q+\sqrt d)(\frac{1}{q+\sqrt d}-\delta)$. По последнему условию на $\delta$, выполнено $(\frac{1}{q+\sqrt d} - \delta)\in \mathbb Q$, поэтому $G(x)$ имеет вид $(q+\sqrt d)a$ для некоторого рационального $a$. Это, по определению, значит что $h(x)=q$.

\bigskip

Условие 2: Докажем, что $x > N$. Действительно, $$G(x) = \left(\frac{1}{q+\sqrt d} - \delta\right)\left(q+\sqrt d\right) = 1 - \delta\left(q+\sqrt d\right)  > 1 - (1-G(N)) = G(N).$$ Так как $G$ - возрастающая, то из $G(x) > G(N)$ следует, что $x > N$.

\bigskip

Условие 3: Докажем, $\left|\overline x - \frac{q}{\sqrt d}\right| < \varepsilon$. Мы знаем что $\frac{1}{q+\sqrt d}-\delta\in \mathbb Q$, поэтому можем сопрячь выражение:
$$\overline x = \overline {G^{-1}\left(\left(\frac{1}{q+\sqrt d} - \delta\right)\left(q+\sqrt d\right)\right)} = $$

$$=\frac{1 + \left(\frac{1}{q+\sqrt d} - \delta\right)\left(q - \sqrt d\right)}{1 - \left(\frac{1}{q+\sqrt d} - \delta\right)\left(q - \sqrt d\right)}= \frac{2q+ \delta(d-q^2)}{2\sqrt d - \delta(d - q^2)}$$

Теперь оценим $\left|\overline x - \frac{q}{\sqrt d}\right|$, после чего используем условия, ограничившие $\delta$:

$$\left|\frac{2q + \delta(d-q^2)}{2\sqrt d - \delta(d-q^2)} - \frac{q}{\sqrt d}\right| = \left|\frac{\delta \left(d-q^2\right)\left(q+\sqrt d\right)}{2d - \delta\left(d-q^2\right)\sqrt d}\right|\le$$

$$\le \left|\frac{\delta (d-q^2)\left(q+\sqrt d\right)}{d}\right| < \varepsilon,$$

что и требовалось доказать.
\end{proof}

Наконец, сформулируем последнюю лемму, необходимую для доказательства теоремы \ref{twodim}.

\begin{lem}\label{h3}
Пусть $a, b > 0$, $a, b \in \mathbb Q[\sqrt d]$, $d\in \mathbb Q, \sqrt d\not\in \mathbb Q$. Пусть также $\overline a > 0$, $\overline b < 0$. Тогда для любого рационального $q$ и любого $M>0$ существует $x$ со следующими свойствами:

1) $h(x) = q$

2) $x>M$

3) $x=ma+nb$ для некоторых положительных рациональных $m,n$.
\end{lem}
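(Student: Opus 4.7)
The plan is to obtain the desired $x$ from Lemma \ref{h2} and then recover the representation $x = ma + nb$ by linear algebra, using the sign hypotheses on $\overline{a}, \overline{b}$ to guarantee that the coefficients $m, n$ come out positive when $x$ is large.

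First I would verify that the pair $(a, b)$ is a $\mathbb{Q}$-basis of $\mathbb{Q}[\sqrt{d}]$. If $b$ were rational, then $\overline{b} = b > 0$, contradicting $\overline{b} < 0$, so $b \notin \mathbb{Q}$. If $a \in \mathbb{Q}$, then clearly $a$ and $b$ are $\mathbb{Q}$-linearly independent; if $a \notin \mathbb{Q}$ and $b = ca$ for some $c \in \mathbb{Q}$, then $\overline{b} = c\overline{a}$ would force $\overline{a}$ and $\overline{b}$ to have the same sign, contradicting the hypothesis. Hence every $x \in \mathbb{Q}[\sqrt{d}]$ admits a unique representation $x = ma + nb$ with $m, n \in \mathbb{Q}$, and Cramer's rule gives
$$m = \frac{x\overline{b} - \overline{x}b}{a\overline{b} - \overline{a}b}, \qquad n = \frac{a\overline{x} - \overline{a}x}{a\overline{b} - \overline{a}b}.$$
The denominator $a\overline{b} - \overline{a}b$ is negative, since $a > 0, \overline{b} < 0$ give $a\overline{b} < 0$ while $\overline{a} > 0, b > 0$ give $\overline{a}b > 0$.

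Next I would apply Lemma \ref{h2} with the given $q$, with some fixed $\varepsilon$ (say $\varepsilon = 1$), and with a threshold $N$ chosen sufficiently large (to be determined). This yields $x$ with $h(x) = q$, $x > N$, and $|\overline{x} - q/\sqrt{d}| < 1$, so in particular $|\overline{x}|$ is bounded by a constant $K$ depending only on $q$ and $d$. Properties (1) and (2) of the lemma are then automatic once $N \geq M$.

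It remains to enforce (3), namely that the rationals $m, n$ determined above are positive. Since $\overline{x}$ is bounded while $x$ may be chosen arbitrarily large, the numerator of $m$, equal to $x \overline{b} - \overline{x} b$, is dominated by $x\overline{b}$, which is large and negative (because $\overline{b} < 0$); together with the negative denominator this gives $m > 0$ once $x$ is large enough. Similarly, the numerator of $n$ is $a\overline{x} - \overline{a}x$, dominated by $-\overline{a}x$, which is large and negative (because $\overline{a} > 0$), again giving $n > 0$. Making these asymptotics quantitative (using the bound $|\overline{x}| < K$) yields an explicit threshold $N_0$ such that $x > N_0$ forces both $m$ and $n$ positive; taking $N = \max(M, N_0)$ in the application of Lemma \ref{h2} finishes the proof. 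The only subtle point is precisely this sign analysis, where the asymmetric sign conditions $\overline{a} > 0$ and $\overline{b} < 0$ are used in an essential way.
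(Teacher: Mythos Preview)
Your proposal is correct and follows essentially the same approach as the paper: apply Lemma~\ref{h2} to obtain $x$ with $h(x)=q$, $x$ large, and $\overline{x}$ bounded, then solve $x=ma+nb$ via the explicit Cramer formulas and use the sign hypotheses $\overline a>0$, $\overline b<0$ to force $m,n>0$. The paper simply writes down the explicit threshold $N_0=\max\bigl\{M,\ (|q/\sqrt d|+1)b/|\overline b|,\ (|q/\sqrt d|+1)a/|\overline a|\bigr\}$ and checks the sign of each numerator directly, whereas you phrase the same computation asymptotically and add the (harmless) preliminary verification that $a,b$ form a $\mathbb Q$-basis; the arguments are otherwise identical.
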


\begin{proof}
Воспользовавшись леммой \ref{h2}, найдём $x$, такой что  
$$x > \max\left\{M, \frac{\left(\left|q/\sqrt d\right|+1\right)b}{|\overline b|}, \frac{\left(\left|q/\sqrt d\right|+1\right)a}{|\overline a|}\right\},$$ $$\left|\overline x - q/\sqrt d\right| < 1,\ h(x)=q.$$ Проверим, что тогда следующие $m,n$ удовлетворяют всем указанным в условии 3) свойствам (положительны, рациональны, и $ma+nb = x$).

$$m=\frac{x\overline b - \overline x b}{a\overline b - \overline a b};\ \ \  n=\frac{x\overline a - \overline x a}{b\overline a - \overline b a}$$

1) Дроби определены корректно: используя $a,b,\overline a>0$, $\overline b < 0$ увидим, что знаменатель первой дроби строго меньше нуля, а второй - больше нуля.

2) Имеем $ma+nb = \frac{x\overline ba - \overline x ba - x\overline ab + \overline x ab}{a\overline b - \overline a b} = \frac{x(\overline ba - \overline ab)}{a\overline b - \overline a b}=x$.

3) Имеем $m,n\in \mathbb Q$, так как числители и знаменатели обеих дробей представимы в виде $t-\overline t$ для $t\in\mathbb Q[\sqrt d]$, а значит, имеют вид $q\sqrt d$ для некоторого $q\in\mathbb Q$. Частное двух чисел такого вида рационально.

4) Докажем, что $m > 0$; для $n$ рассуждение симметрично.

Знаменатель, как уже было сказано, меньше нуля. Остаётся доказать $x\overline b < \overline x b$. Однако, левое число меньше нуля ($x>0, \overline b < 0$), поэтому достаточно доказать
$|x\overline b| > |\overline x b|$. Используем оценки на $x$:

$$\left|\overline x b\right| < \left|q/\sqrt d\right||b| + \left|\overline x - q/\sqrt d\right||b| < \left(\left|q/\sqrt d\right|+1\right)|b| < |x||\overline b|.$$

Подходящие $m,n$ найдены, что и требовалось доказать.

\end{proof}

\section{Доказательство теоремы \ref{twodim} }\label{di_proof}
\textbf{Теорема \ref{twodim} } \emph{(о средних линиях из $\mathbb Q[\sqrt d]$)}

\emph{Пусть $d\in\mathbb Q,\sqrt d\not\in\mathbb Q$. Пусть $A,B,C$ -- трапеции единичной высоты c параллельными основаниями, углами $45^o$ при основании, и длины их средних линий равны $a,b,c$. Пусть также $a,b\in\mathbb Q[\sqrt d]$ и $\overline a > 0$, $\overline b < 0$. Тогда трапециями, гомотетичными $A$ и $B$, можно замостить $C$, если и только если $c\in\mathbb Q[\sqrt d]$.}

\begin{proof}

Доказательство в сторону "только если" строится так же, как в теореме \ref{my_big}, за исключением того что вместо $\mathbb Q$ мы используем $\mathbb Q[\sqrt d]$. Ключевым моментом является доказательство в сторону "если" (построение замощения), для которого и были введены новые леммы.

Мы хотим доказать для произвольного $c>1$ из $Q[\sqrt d]$, что $c\in T(a,b)$.

Говоря о <<свойствах>> я, как и раньше, имею в виду свойства из леммы \ref{base}.

Мы имеем $a, b\in T(a,b)$.

По свойству 3) выполнено $2a, 2b \in P(a,b)$.

Теперь применим лемму \ref{h3} к числам $2a,2b$ и найдём $x > (a+b)$, такой что $x = 2am+2bn$ и $h(x) = 0$, для некоторых $m,n\in\mathbb Q$.

Из того, что $x=2am+2bn>(a+b)$ следует, что либо $m$, либо $n$ превышают $\frac12$. Пусть $m>\frac12$ (случай $n>\frac12$ полностью аналогичен).

Тогда по свойству 6) выполнено $2a(m-1/2)\in P(a,b)$ и $2bn\in P(a,b)$.

По свойству 1) имеем $2a(m-1/2)+2bn\in P(a,b)$.

По свойству 4) имеем $a+2a(m-1/2)+2bn = 2am+2bn = x \in T(a,b)$.

Мы знаем, что $h(x)=0$. Значит, $G(x)$ имеет вид $q\sqrt d$ для некоторого $q\in\mathbb Q$. Применяя свойство 5) к $a=b=x$, получим $G^{-1}(G(x)G(x))\in T(a,b)$. Это число рационально, т.к. $G(x)^2=q^2d\in\mathbb Q$, а $G^{-1}$ -- рациональная функция с рациональными коэффициентами. Таким образом, $T(a,b)$ содержит хотя бы одно рациональное число.

Если $c$ рационально, то теорема доказана (сведена к теореме \ref{my_big}). Если же нет -- применим лемму \ref{h3} к числам $2a, 2b, q = h(c), M = a+b+c$ и найдём $x'$, такой что $h(x')=h(c)$ и $x'>c$. Повторяя доказательство того, что $x\in T(a,b)$, получим, что $x'\in T(a,b)$. Теперь у нас есть все условия (важным условием было наличие рационального числа в $T(a,b)$), чтобы воспользоваться леммой \ref{h1}. По лемме \ref{h1}, получим $c\in T(a,b)$, ч.т.д.

\end{proof}

\section{Тривиальные разрезания и связанные с ними леммы}\label{tri}

Определим тривиальные разрезания, упомянутые в формулировке теоремы \ref{log}.

\begin{defi}
Замощение плоской фигуры называется \emph{тривиальным} если выполнено одно из двух условий:

1) Оно состоит только из данной фигуры

2) Существует отрезок, проходящий только по границам фигур замощения и разделяющий фигуру на две, каждая из которых замощена тривиально (определение применяется рекурсивно).
\end{defi}

В связи с новым видом замощений вводим новое обозначение:

\begin{defi}
Пусть $\alpha$ -- множество чисел, больших единицы. Обозначим за $T'(\alpha)$ множество средних линий трапеций, которые можно \emph{тривиально} замостить трапециями, гомотетичными $t(a_1),..,t(a_k)$ для $a_1,..,a_k\in\alpha$. Аналогично введём обозначение $P'(\alpha)$. 
\end{defi}

Очевидно, что $T'(\alpha)\subset T(\alpha)$, $P'(\alpha)\subset P(\alpha)$. Нетрудно также доказать, что пара $(T'(\alpha),P'(\alpha))$ удовлетворяет свойствам из леммы \ref{base}. Достаточно просто повторить доказательство леммы $\ref{base}$, замечая при этом, что при составлении новой стандартной фигуры любым из указанных способов, получается тривиальное замощение.

Для удобства ссылок, я выношу это утверждение в отдельную лемму; доказательство леммы опустим.

\begin{lem}\label{dbase}
Пусть $\alpha\subset\{x\in\mathbb R:x>1\}$. Тогда для множеств $T'(\alpha),P'(\alpha)$ выполнены свойства $1),..,7)$ из леммы \ref{base}.
\end{lem}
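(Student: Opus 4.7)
The plan is to observe that the proof of Lemma \ref{base} proceeds by exhibiting, for each property, an explicit geometric construction that glues given figures along certain segments; we need only check that each such construction produces a trivial tiling as defined in Section~\ref{tri}. The key point is structural: for a standard figure built as the union of two already-trivially-tiled pieces joined along a single straight segment that runs entirely along the boundaries of the pieces, the definition of triviality applies directly (clause 2)) with that segment as the dividing cut, and the two parts are trivially tiled by hypothesis.

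I would treat properties 1)--5) uniformly. Each is an instance of Lemma~\ref{draw}: a standard figure $C$ is split into two standard figures $A,B$ by a single segment (the common boundary shown in Figures~\ref{k1}--\ref{k3}). Given that $A$ and $B$ lie in $T'(\alpha)$ or $P'(\alpha)$, their trivial tilings (by homotheties of $t(a_1),\dots,t(a_k)$) paste together to form a tiling of $C$ in which the separating segment runs only along tile boundaries of both sub-tilings; by definition this tiling of $C$ is trivial, so the corresponding sum/combination belongs to $T'(\alpha)$ or $P'(\alpha)$, matching the conclusion of the respective property of Lemma~\ref{base}.

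For property 6), the grid construction on Figure~\ref{k6} must be exhibited as a sequence of straight-line cuts. I would proceed by induction: first cut the $k\times n$ rectangle of parallelograms by a single horizontal segment separating one row from the remaining $(k-1)\times n$ rectangle; each piece is either a $1\times n$ strip or a smaller grid, and the procedure repeats. Similarly each $1\times n$ strip can be split off one cell at a time by vertical cuts. Each resulting single cell is a homothety of $p(a)$ and hence, by hypothesis $a\in P'(\alpha)$, admits a trivial tiling; recursively all cuts lie on boundaries, so the whole grid's tiling is trivial.

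For property 7), the construction is a direct iteration of property 5): the pyramid of $2^n$ trapezoids is built by repeatedly joining two congruent (or rescaled) copies along a common base, which is precisely the move justified in the $T'$-version of property 5) already proved. Since property 5) preserves triviality, induction on $n$ yields that the resulting trapezoid with middle line $c=f^{(n)}(a)$ lies in $T'(\alpha)$, and the rational expression for $c$ is identical to that in Lemma~\ref{base}(7). No new obstacle arises; the whole argument is a bookkeeping check that every cut in every figure used in the proof of Lemma~\ref{base} meets the recursive definition of triviality, which it does by construction.
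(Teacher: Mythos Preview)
Your proposal is correct and follows exactly the approach the paper indicates: the paper explicitly omits the proof, stating only that one should ``повторить доказательство леммы~\ref{base}, замечая при этом, что при составлении новой стандартной фигуры любым из указанных способов, получается тривиальное замощение.'' Your write-up is a faithful and complete expansion of this remark, correctly handling the recursive decomposition needed for properties~6) and~7).
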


Новым результатом, после перехода к тривиальным разрезаниям, становится следующая лемма, в некотором роде, обратная лемме \ref{base}. 
\begin{lem}\label{rbase}
Пусть $\alpha\subset\{x\in\mathbb R:x>1\}$. Пусть $T, P$ -- такая пара множеств, что для них выполняются свойства $1),..,5)$ из леммы \ref{base}, и при этом $\alpha\subset T$.

Тогда $T'(\alpha) \subset T$ и $P'(\alpha) \subset P$.
\end{lem}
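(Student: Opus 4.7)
Я провёл бы доказательство индукцией по числу трапеций в тривиальном замощении стандартной фигуры $F$ (здесь $F$ -- это $t(b)$ при $b\in T'(\alpha)$ или $p(b)$ при $b\in P'(\alpha)$), доказывая одновременно оба включения $T'(\alpha)\subset T$ и $P'(\alpha)\subset P$. База индукции -- замощение из одной трапеции: тогда $F = t(b)$ гомотетична некоторой $t(a_i)$ при $a_i\in\alpha$, откуда $b = a_i\in \alpha\subset T$. Для параллелограмма такой базы нет: $p(b)$ нельзя замостить единственной трапецией.

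Индуктивный шаг: пусть $F$ тривиально замощена более чем одной трапецией. По определению, существует отрезок, идущий по границам трапеций замощения и делящий $F$ на две части $F_1, F_2$, каждая из которых тривиально замощена строго меньшим числом трапеций. Ключевое наблюдение: границы стандартных трапеций -- это горизонтальные отрезки либо отрезки, наклонённые под $45^\circ$, поэтому и делящий отрезок имеет такой наклон. Значит, $F_1$ и $F_2$ (после приведения к единичной высоте подходящей гомотетией) сами являются стандартными фигурами, а тройка $(F_1, F_2, F)$ реализует одну из пяти конфигураций из леммы \ref{draw}. Обозначив средние линии (основания) $F_i$ через $x_i$, мы получим, что $b$ выражается через $x_1, x_2$ одной из формул $x_1+x_2$, $x_1 x_2/(x_1+x_2)$ или $(x_1 x_2+1)/(x_1+x_2)$.

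По предположению индукции каждое $x_i$ лежит в $T$ или $P$ в зависимости от типа $F_i$. Применяя к паре $(T, P)$ соответствующее свойство из 1)--5), гарантированное условием леммы, получаем требуемое включение для $b$.

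Главной трудностью я считаю аккуратное геометрическое обоснование того, что любое деление стандартной фигуры делящим отрезком (вдоль границ тайлов) действительно соответствует одной из пяти конфигураций леммы \ref{draw}: нужно проверить, что для каждого допустимого направления отрезка (горизонтального либо под $45^\circ$) получающиеся две части после нормировки на единичную высоту снова суть $t(\cdot)$ или $p(\cdot)$, и подсчитать, какой из пяти случаев при этом реализуется. По существу это повторение разбора из доказательства леммы \ref{draw}, но применённое в обратную сторону: вместо перечисления способов склеить две стандартные фигуры в одну перечисляются способы разделить одну на две.
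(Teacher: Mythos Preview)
Your proposal is correct and matches the paper's proof almost exactly: both argue by induction on the number of pieces in a trivial tiling, handle the base case via $\alpha\subset T$, and in the inductive step classify the cut (horizontal or at $45^\circ$) so that the two parts are again standard figures, then invoke the appropriate one of properties 1)--5). The one detail the paper makes explicit that you leave implicit is the degenerate case where a $45^\circ$ cut through a trapezoid would produce a triangle and a hexagon; the paper disposes of this by noting that a triangle cannot be tiled by trapezoids, and you should mention this when carrying out the geometric case analysis you flag as the main difficulty.
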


\begin{proof}
Пусть $c\in T'(\alpha)$ или $c\in P'(\alpha)$. Это значит, что $t(c)$ или $p(c)$ тривиально замощается трапециями, гомотетичными трапециям $t(a_1),..,t(a_k)$, где $a_i\in \alpha$. Пусть минимальное такое замощение состоит из $n$ фигур. Мы утверждаем, что в этом случае $c$ будет также лежать в $T$ или $P$ соответственно; доказывать это будем индукцией по $n$.

Далее я веду рассуждение только для $c\in T'(\alpha)$, при $c\in P'(\alpha)$ оно проводится аналогично.

База: если $n = 1$, то $c\in \alpha$. Мы знаем, что $\alpha\subset T$, поэтому $c\in T$.

Переход: по определению тривиального разрезания, есть прямой разрез через всю трапецию $t(c)$, разделяющий её на две фигуры, замостимые трапециями $t(a_1),..,t(a_k)$ для $a_1,..,a_k\in\alpha$. Так как разрез идёт по границам фигур, он может проходить только в трёх направлениях -- горизонтально или под углом $45^o$ к горизонтали (в любую сторону).

Перебрав варианты прохождения разреза, мы увидим, что либо он делит трапецию на две как на рисунке \ref{k3}, либо на параллелограмм, как на рисунке \ref{k2}, либо на шестиугольник и треугольник. Однако треугольник было бы невозможно сложить из трапеций, поэтому этот вариант отбрасывается. Разберём два возможных случая:

1) Если трапеция разбита горизонтальным разрезом на две трапеции, гомотетичные стандартным, обозначим эти две трапеции как $t(a)$ и $t(b)$. Тогда $c=(ab+1)/(a+b)$ (это равенство доказано в лемме \ref{draw}).

Заметим, что $a$ и $b$ лежат в $T'(\alpha)$, так как составлены из трапеций, гомотетичных $t(a_1),..,t(a_k)$ для $a_1,..,a_k\in\alpha$. Более того, замощения $t(a)$ и $t(b)$ состоят не более, чем из $n-1$ фигуры, поэтому, по предположению индукции, $a,b\in T$.

Однако $T$ удовлетворяет свойствам из леммы \ref{base}, в том числе, $a,b\in T\Rightarrow (ab+1)/(a+b)\in T$. Таким образом, $c\in T$.

2) Если трапеция разбита на параллелограмм и трапецию, нужно действовать аналогично, но в конце применить свойство 4).

Если $c\in P$, шаг индукции проводится аналогично, но разбираются три варианта разрезания параллелограмма на две стандартные фигуры.

\end{proof}

\section{Теорема \ref{log} о логарифмах}\label{lo_proof}

Доказательство теоремы о логарифмах состоит из громоздкой проверки многих утверждений. Для начала я повторю формулировку и объясню общее направление доказательства.

\textbf{Теорема \ref{log}}

\emph{Пусть $A, B$ -- трапеции единичной высоты c параллельными основаниями, углами $45^o$ при основании и средними линиями $a, b\in \mathbb Q[\sqrt d]$, причём $1 < \overline a < a$. Пусть трапециями, гомотетичными трапеции $A$, можно тривиально замостить трапецию $B$.
Тогда $b$ удовлетворяет следующим условиям:}

(i) $1 < \overline b < b$;

(ii) $\overline b/ b \ge \overline a/a$;

(iii) $\ln G(b)/ \ln \overline {G(b)} \ge \ln G(a)/ \ln \overline{G(a)} $, где $G(x) = (x-1)/(x+1).$
\bigskip

\textbf{Общий план доказательства (см. также раздел \ref{geom} и рисунок \ref{later})}

Введём ещё одно условие:

(i') $0 < \overline b < b$

Обозначим через $T$ множество чисел $b\in \mathbb Q[\sqrt d]$, удовлетворяющих условиям (i),(ii),(iii); через $P$ -- множество чисел $b\in \mathbb Q[\sqrt d]$, удовлетворяющих условиям (i'), (ii).

Мы хотим доказать, что пара $(T,P)$ удовлетворяет свойствам $1),..,5)$ из леммы $\ref{base}$. Тогда по лемме $\ref{rbase}$ мы получим что $T'(a)\in T$ и $P'(a)\in P$ (так как очевидно, что $a\in T$). Как следствие, средняя линия трапеции, замостимой трапецией $A$, будет лежать в $T$, что и требуется доказать.

Однако показать, что пара $(T,P)$ удовлетворяет свойствам $1)-5)$, -- непросто. Для примера рассмотрим свойство $4)$: <<если $x\in P, y\in T$, то $x+y\in T$>>. Чтобы проверить свойство $4)$, нужно убедиться, что число $x+y$ удовлетворяет нетривиальным неравенствам $(i),(ii),(iii)$. Аналогично для каждого свойства. Проверка неравенства $(iii)$ в доказательстве свойства $4)$ оказалось настолько сложной, что её пришлось разнести на три леммы: два вспомогательных неравенства и само неравенство $(iii)$. Поэтому я сначала доказываю эти три леммы, а потом теорему, которая по модулю этих лемм станет сравнительно простой.

\begin{lem}\label{001}
Пусть $z > t > 1$. Тогда $\ln G(t)/\ln G(z) > z/t$.
\end{lem}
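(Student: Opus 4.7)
Первый шаг -- переписать доказываемое неравенство в удобной аналитической форме. Поскольку функция $G$ монотонно возрастает из $(1,+\infty)$ в $(0,1)$ (лемма \ref{glem}), из $z > t > 1$ следует $0 < G(t) < G(z) < 1$, и в частности $\ln G(t) < \ln G(z) < 0$. Умножая обе части неравенства $\ln G(t)/\ln G(z) > z/t$ на отрицательную величину $t\ln G(z)$ (знак при этом переворачивается), я сведу его к эквивалентному неравенству $-t\ln G(t) > -z\ln G(z)$, то есть к утверждению $f(t) > f(z)$, где
$$f(x) := -x\ln G(x) = x\ln\frac{x+1}{x-1},\qquad x > 1.$$
Таким образом, всё сведётся к проверке строгого убывания $f$ на $(1,+\infty)$.

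Далее посчитаю производную:
$$f'(x) = \ln\frac{x+1}{x-1} + x\left(\frac{1}{x+1} - \frac{1}{x-1}\right) = \ln\frac{x+1}{x-1} - \frac{2x}{x^2-1},$$
и покажу, что $f'(x) < 0$. После замены $u = 1/x \in (0,1)$ нужное неравенство $\ln\frac{x+1}{x-1} < \frac{2x}{x^2-1}$ перепишется как
$$\ln\frac{1+u}{1-u} < \frac{2u}{1-u^2},\qquad u\in (0,1).$$
Наконец, сравню два ряда Тейлора: левая часть равна $2\sum_{k\ge 0}u^{2k+1}/(2k+1)$, правая равна $2\sum_{k\ge 0}u^{2k+1}$. При $k = 0$ коэффициенты совпадают, а при $k\ge 1$ коэффициент правой части строго больше коэффициента левой (поскольку $1/(2k+1) < 1$). Значит, разность рядов положительна при всех $u\in(0,1)$, откуда и получится строгая оценка $f'(x) < 0$.

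Серьёзных препятствий я здесь не ожидаю. Основная тонкость -- аккуратно следить за знаками при умножении на отрицательные логарифмы $\ln G(t)$, $\ln G(z)$, чтобы правильно превратить исходное неравенство в монотонность $f$. После этого остаётся только почленное сравнение двух известных степенных рядов, которое элементарно. В качестве дополнительной сверки можно заметить, что $f(x) \to +\infty$ при $x\to 1^+$ и $f(x)\to 2$ при $x\to +\infty$, что согласуется с доказываемым убыванием.
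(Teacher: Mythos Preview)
Your proof is correct and follows essentially the same route as the paper: both reduce the claim to the strict monotonicity of $x\ln G(x)$ on $(1,+\infty)$, which amounts to the inequality $\ln\frac{x+1}{x-1} < \frac{2x}{x^2-1}$. The only difference is in how this last inequality is established --- you compare Taylor series after the substitution $u=1/x$, while the paper computes $f''(x)=-4/(x^2-1)^2<0$ and combines this with $f'(x)\to 0$ as $x\to+\infty$ to conclude $f'>0$ (in the paper's sign convention).
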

\begin{proof}
Значение $G(z)$, как и $G(t)$, лежит в интервале $(0,1)$. Поэтому неравенство $\ln G(t)/\ln G(z) > z/t$ равносильно $t\ln G(t) < z\ln G(z)$.

Рассмотрим функцию $f(x) := x\ln G(x) = x(\ln(x-1)-\ln(x+1))$. Её производные равны:

$$f'(x)=\frac{2x}{(x^2-1)}+\ln\frac{x-1}{x+1},\qquad \ f''(x)=-\frac{4}{(x^2-1)^2}.$$

Функция $f''$ отрицательна, значит, $f'$ строго убывает на луче $(1,+\infty)$. По формуле ясно, что $f'$ стремится к нулю при $x\to+\infty$, а значит $f'$ положительна. Значит, функция $f$ на своей области определения строго возрастает.

Из этого и следует, что для $z > t > 1$ верно $t\ln G(t) < z\ln G(z)$.
\end{proof}
\begin{lem}\label{000}
Для $c\in (0,1)$ определим $F_c(x) = \frac{1+\left(\frac{x-1}{x+1}\right)^c}{1-\left(\frac{x-1}{x+1}\right)^c}$. Тогда $F_c'(x) > 1/c$ для $x> 1$.
\end{lem}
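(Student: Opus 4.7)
The plan is to compute $F_c'(x)$ explicitly, then reduce the desired inequality to a standard convexity inequality for $\sinh$.

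First I would exploit the identity $F_c(x) = G^{-1}(G(x)^c)$, which is immediate from $G^{-1}(y) = (1+y)/(1-y)$. Writing $y := F_c(x)$, this identity means $\ln G(y) = c\ln G(x)$. Differentiating both sides with respect to $x$ and using $G'(x)/G(x) = 2/(x^2-1)$ (a direct calculation from $G(x)=(x-1)/(x+1)$) gives
\[
\frac{F_c'(x)}{y^2-1} \;=\; \frac{c}{x^2-1}, \qquad \text{hence}\qquad F_c'(x) \;=\; \frac{c\,(y^2-1)}{x^2-1}.
\]

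Next I would rewrite the desired inequality $F_c'(x)>1/c$ as $c^2(y^2-1)>x^2-1$ and use $x^2-1=4G(x)/(1-G(x))^2$ together with $y^2-1 = 4G(x)^c/(1-G(x)^c)^2$ (both from $x=(1+G(x))/(1-G(x))$). After taking a positive square root, the goal becomes
\[
c\,G(x)^{(c-1)/2}\bigl(1-G(x)\bigr) \;>\; 1-G(x)^c.
\]
Substituting $G(x)=e^{-s}$ with $s>0$ and multiplying both sides by $e^{cs/2}$ collapses this to
\[
c\bigl(e^{s/2}-e^{-s/2}\bigr) \;>\; e^{cs/2}-e^{-cs/2}, \qquad \text{i.e.}\qquad c\sinh(s/2) \;>\; \sinh(cs/2).
\]

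The last inequality, with $a:=s/2>0$ and $c\in(0,1)$, is just strict convexity of $\sinh$ on $[0,\infty)$, since $\sinh''=\sinh>0$ on $(0,\infty)$: writing $ca = c\cdot a + (1-c)\cdot 0$ yields
\[
\sinh(ca) \;<\; c\sinh(a) + (1-c)\sinh(0) \;=\; c\sinh(a).
\]
This finishes the proof.

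The only mildly tricky part is carrying out the algebraic reduction to $c\sinh(a)>\sinh(ca)$; the substitutions $y=F_c(x)$, then $G(x)=e^{-s}$, and finally $a=s/2$ are what make it transparent. Once reduced, the inequality is a textbook convexity fact, so there is no real analytic obstacle.
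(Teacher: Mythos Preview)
Your argument is correct. The algebraic reduction is essentially the paper's: both proofs compute $F_c'(x)$, pass to the variable $g=G(x)\in(0,1)$, and arrive at the same key inequality --- in your notation $c\sinh(s/2)>\sinh(cs/2)$ after setting $g=e^{-s}$; in the paper's notation $\dfrac{t^{-c}-t^c}{c}<t^{-1}-t$ after setting $t=\sqrt g$, which is literally the same statement once one writes $t=e^{-s/2}$. Where the two part ways is in how that final inequality is dispatched: the paper shows that $x\mapsto (t^{-x}-t^x)/x$ is increasing on $(0,1]$ by a further chain of substitutions that eventually reduces to $\tfrac12\ln u<(u-1)/(u+1)$, whereas you recognize the inequality directly as strict convexity of $\sinh$ on $[0,\infty)$. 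Your endgame is shorter and more transparent; the paper's is more computational but stays entirely within elementary algebra and real calculus without invoking $\sinh$. One small presentational point: you might note explicitly that $y=F_c(x)>1$ (hence $y^2-1>0$), so that the passage from $F_c'(x)>1/c$ to $c^2(y^2-1)>x^2-1$ and the subsequent square-root step are legitimate.
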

\begin{proof}
Вычислим производную:
$$F_c'(x) = \frac{4c\left(\frac{x-1}{x+1}\right)^c}{(x^2-1)\left(1-\left(\frac{x-1}{x+1}\right)^c\right)^2}.$$

Обозначим $y := \frac{x-1}{x+1}$ и выразим $x$ в формуле $F_c'(x)$ через $y$ (заметим, что условием на $y$ будет $y\in (0,1)$):

$$F_c'(x) = \frac{4cy^c}{\left(\left(\frac{1+y}{1-y}\right)^2-1\right)(1-y^c)^2}=\frac{4cy^c(1-y)^2}{((1+y)^2-(1-y)^2)(1-y^c)^2}=\frac{cy^c(1-y)^2}{y(1-y^c)^2}$$

Мы хотим доказать следующее неравенство:

$$\frac{cy^c(1-y)^2}{y(1-y^c)^2} > \frac 1c$$

Сделаем замену $t=\sqrt y$, тогда на $t$ будет наложено условие $t\in(0,1)$:

$c^2t^{2c}(1-t^2)^2 > t^2(1-t^{2c})^2$

$ct^c(1-t^2)>t(1-t^{2c})$

$(1-t^2c)/(ct^c) < (1-t^2)/t$

$$\frac{\frac1{t^c}-t^{c}}{c} < \frac{\frac1t - t}1.$$

Теперь, если мы докажем, что производная функции $f(x)=(t^{-x}-t^x)/x$ 

на $x\in(0,1]$ больше нуля, то мы докажем утверждение леммы.

Вычислим эту производную:

$$((t^{-x}-t^x)/x)' = \frac{t^{-x}(t^{2x}-x(t^{2x}+1)\ln t - 1)}{x^2}.$$

Мы хотим доказать, что это выражение больше нуля. Поделив на заведомо положительные множители, сведём задачу к следующей:

$$t^{2x}-x(t^{2x}+1)\ln t - 1 > 0.$$

Заменим $u = t^{2x}$, выполнено ограничение $u\in(0,1)$.

$$u-\frac12\ln u(u+1) - 1 > 0;$$

$$\frac12\ln u < \frac{u-1}{u+1}.$$

Можно заметить, что в $u=1$ значения выражений с обоих сторон неравенства совпадают. Чтобы доказать неравенство при $u \in (0,1)$, достаточно проверить $(\frac12\ln u - \frac{u-1}{u+1})' > 0$.

$$(\frac12\ln u - \frac{u-1}{1+u})' = \frac{(u-1)^2}{2u(u+1)^2} > 0.$$

Из этого неравенства следует утверждение леммы, ч.т.д.

\end{proof}

\begin{lem}\label{043}
Напомню, что $P$ -- множество чисел $b\in \mathbb Q[\sqrt d]$, удовлетворяющих условиям $(i',ii)$, а $T$ -- множество чисел $b\in \mathbb Q[\sqrt d]$, удовлетворяющих условиям $(i,ii,iii)$.

Тогда если $x\in T, y\in P$, то $x+y$ удовлетворяет условию (iii), т.е. $$\ln G({x+y})/\ln G(\overline{x+y}) > \ln G(a)/ \ln {G(\overline{a})}.$$
\end{lem}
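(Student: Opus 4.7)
The plan is to reformulate condition (iii) via the function $F_c$ from Lemma \ref{000}, with $c := \ln G(a)/\ln G(\overline{a})$. Since $a > \overline{a} > 1$ and $G$ maps $(1,+\infty)$ into $(0,1)$, both $\ln G(a)$ and $\ln G(\overline{a})$ are negative, so $c \in (0,1)$. Because $\ln G(\overline{b}) < 0$, condition (iii) for $b$ is equivalent (flipping the sign on multiplication by $\ln G(\overline{b})$ and then applying $G^{-1}$) to
$$b \le F_c(\overline{b}),$$
where $F_c(t) = (1 + G(t)^c)/(1 - G(t)^c)$ as in Lemma \ref{000}. In particular $a$ sits on the boundary: $a = F_c(\overline{a})$. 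Our target becomes $x + y \le F_c(\overline{x+y}) = F_c(\overline{x} + \overline{y})$; this is well-defined because $\overline{x} > 1$ (from $x\in T$) and $\overline{y} > 0$ (from $y\in P$) give $\overline{x+y} > 1$.

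Since $x \in T$ already supplies $x \le F_c(\overline{x})$, it suffices to show $y \le F_c(\overline{x} + \overline{y}) - F_c(\overline{x})$. By the mean value theorem, the right-hand side equals $F_c'(\xi)\,\overline{y}$ for some $\xi \in (\overline{x},\overline{x}+\overline{y})\subset (1,+\infty)$, so the goal collapses to the slope comparison $y/\overline{y} \le F_c'(\xi)$. Lemma \ref{000} gives the lower bound $F_c'(\xi) > 1/c$, while condition (ii) for $y \in P$ gives the upper bound $y/\overline{y} \le a/\overline{a}$. Hence the entire lemma reduces to the single clean inequality relating $a$ and $c$:
$$\frac{a}{\overline{a}} \le \frac{1}{c} = \frac{\ln G(\overline{a})}{\ln G(a)}.$$

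Multiplying this by $a\ln G(\overline{a}) < 0$ flips it to $a\ln G(a) \ge \overline{a}\ln G(\overline{a})$, which is exactly Lemma \ref{001} applied to $z = a > t = \overline{a} > 1$. Chaining the estimates, $F_c'(\xi) > 1/c \ge a/\overline{a} \ge y/\overline{y}$ is strict at the first step, so $y < F_c'(\xi)\,\overline{y}$; combining with the (possibly weak) $x \le F_c(\overline{x})$ produces the strict bound $x + y < F_c(\overline{x+y})$, which is the strict form of (iii) the lemma demands. The main obstacle, I expect, is simply spotting the right reformulation: once (iii) is read as a sub-graph condition $b \le F_c(\overline{b})$ for a curve passing through $(\overline{a},a)$, Lemmas \ref{000} and \ref{001} are tailored to give precisely the derivative bound for $F_c$ and the comparison $c \le \overline{a}/a$ that together close the mean-value-theorem calculation; everything else is bookkeeping with the signs of logarithms.
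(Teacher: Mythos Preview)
Your argument is correct and essentially identical to the paper's: both reformulate (iii) as $b\le F_c(\overline b)$, then compare slopes via $F_c'>1/c>a/\overline a\ge y/\overline y$ using Lemmas~\ref{000} and~\ref{001}; the paper phrases the mean-value step as comparing $F_c$ with an explicit linear function $K(z)=(z-\overline x)\,y/\overline y + x$, which is the same thing. One cosmetic remark: your cross-multiplication sentence (``multiplying by $a\ln G(\overline a)<0$'') is a bit garbled --- the clean route is to cross-multiply by $\overline a\,\ln G(a)<0$ --- but the resulting inequality $a\ln G(a)>\overline a\ln G(\overline a)$ is exactly what Lemma~\ref{001} provides, so the logic stands.
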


\begin{proof}
Обозначим $c:=\ln G(a)/ \ln {G(\overline{a})}$. Заметим, что по лемме \ref{glem} из $a>\overline a > 1$ следует, что $c\in (0,1)$. Условие $\ln G(b)/\ln G(b') = c$ в области $1 < b' < b$ можно переписать в виде:

$$\ln \Bigl(\frac{b-1}{b+1}\Bigr) = c\ln\Bigl(\frac{b'-1}{b'+1}\Bigr) \Leftrightarrow$$

$$\frac{b-1}{b+1} = \Bigl(\frac{b'-1}{b'+1}\Bigr)^c\Leftrightarrow$$

$$b = \frac{1+(\frac{b'-1}{b'+1})^c}{1-(\frac{b'-1}{b'+1})^c} = F_c(b'),$$

где функция $F_c$ определена в лемме \ref{000}.

Пары $b, b'$, удовлетворяющие $\ln G(b)/\ln G(b') = c$, -- это график функции $b = F_c(b')$.

\smallskip

Так как $x\in T$, то $\ln G(x)/ \ln {G(\overline{x})} \ge c$.

Если в выражении $\ln G(x_0)/ \ln {G(\overline{x})}$ устремить $x_0$ к $+\infty$, то само выражение будет непрерывно уменьшаться, стремясь к нулю. Значит, найдётся такой $x_0 > x$, что $\ln G(x_0)/ \ln {G(\overline{x})} = c$. Что равносильно, $F_c(\overline x) = x_0 > x$.

Введём функцию $K(z) = (z-\overline x)\frac{y}{\overline{y}}+ x$. Заметим, что $K(\overline x)= x$ и $K(\overline{x+y})={x+y}$.

Покажем, что на отрезке от $\overline x$ до $\overline{x+y}$ производная функции $F_c$ больше, чем производная функции $K$. Действительно, $F_c' > 1/c = \ln {G(\overline{a})}/\ln G(a) > a/\overline{a} \ge {y}/\overline{y} = K'$. Здесь использованы леммы \ref{000}, \ref{001}, а также тот факт, что $y\in P$.

Мы уже получили, что $F_c(\overline{x}) > x = K(\overline{x})$, и благодаря неравенству на производные, имеем $F_c(\overline{x+y}) > K(\overline{x+y}) = x+y$.

Рассмотрим тождество $\ln G(F_c(z))/\ln G(z)=c$ (оно равносильно $F_c(z)=F_c(z)$, как показано в начале доказательства). Подставим в него $\overline{x+y}$:
$$\ln G(F_c(\overline{x+y}))/\ln G(\overline{x+y}) = c.$$
Функция $\ln G$ -- убывающая. Как показано в предыдущем абзаце, $x+y<F_c(\overline{x+y})$. Используем это:
$$\ln G(x+y)/\ln G(\overline{x+y}) > c.$$
Вспоминая, что $c=\ln G(a)/\ln G(\overline a)$, получим $\ln G({x+y})/\ln G(\overline{x+y}) > \ln G(a)/ \ln (G(\overline{a})$, ч.т.д.
\end{proof}

Наконец, мы имеем возможность перейти к доказательству теоремы. Я определил множества $T,P$ выше, поэтому сразу перейду к доказательству того, что они удовлетворяют пяти нужным свойствам из леммы \ref{base}.

\begin{proof}[Доказательство теоремы \ref{log}]

\

Свойство 1. Пусть $x, y\in P$. Доказываем, что $x+y\in P$, то есть $(x+y)$ удовлетворяет условиям $(i',ii)$, наложенным на множество $P$.

1(i'). Мы имеем $0<\overline x<x, 0 < \overline y < y$. Сложив, получаем $0 < \overline {(x+y)} < (x+y)$.

1(ii). Мы имеем $x,y > 0$, поэтому дроби $x/(x+y)$ и $y/(x+y)$ положительны и дают единицу в сумме. Домножим на эти коэффициенты неравенства из условия $(ii)$, получим

$$\frac{\overline a}{a} \le \frac{\overline x}{x}\frac{x}{(x+y)} + \frac{\overline y}{y}\frac{y}{(x+y)} = \frac{\overline{x+y}}{x+y},$$

ч.т.д.

\bigskip

Свойство 2. Пусть $x, y\in P$. Мы доказываем, что $(xy)/(x+y)\in P$.

Воспользуемся тождеством $\frac{y\overline y} {(x+y)(\overline{x+y})}x + \frac{x\overline x}{(x+y)(\overline{x+y})}y = \frac{xy}{x+y}$.

У нас есть условия $x>\overline x > 0$, $y>\overline y > 0$, из чего следует, что коэффициенты перед $x$ и $y$ в этом тождестве положительны. Коэффициенты будут рациональны, так как являются произведением сопряжённых друг другу чисел из $\mathbb Q[\sqrt d]$.

Заметим, что если $x\in P$, то для любого положительного рационального числа $a$ верно $ax\in P$ (условия (i'),(ii) проверяются непосредственно).

Тогда мы можем домножить $x,y$ на рациональные положительные числа и получить $\frac{y\overline y} {(x+y)(\overline{x+y})}x\in P$, $\frac{x\overline x}{(x+y)(\overline{x+y})}y\in P$. Используя доказанное выше свойство 1, получаем $\frac{xy}{x+y}\in P$.

\bigskip

Свойство 3. Пусть $x, y\in T$. Доказываем, что $x+y\in P$.

Можно заметить, что неравенство $(i)$ сильнее неравенства $(i')$, поэтому $T\subset P$. Из этого следует, что $x, y\in P$ и, по свойству 1, получаем $x+y\in P$.

\bigskip

Свойство 4. Пусть $x\in T, y\in P$. Доказываем, что $x+y\in T$.

4(i). Аналогично 1(i').

4(ii). Аналогично 1(ii).

4(iii). Доказано в лемме \ref{043}.

\bigskip

Свойство 5. Пусть $x\in T, y\in T$. Доказываем, что $G^{-1}(G(x)G(y))=\frac{xy+1}{x+y}\in T$.

Все пункты будут построены по одному принципу: берём неравенство, следующее из свойств $(i,ii,iii)$ и применяем несколько преобразований.

\bigskip

5(i) Так как $\overline x,\overline y > 1$, то $(\overline x-1)(\overline y - 1) > 0\Rightarrow$

$\overline {xy} + 1 > \overline{x+y}\Rightarrow$

$\frac{\overline{xy}+1}{\overline{x+y}} > 1$. (доказана первая часть неравенства (i)).

Далее, $(y\overline y -1)(x-\overline x) + (x\overline x -1)(y-\overline y) > 0\Rightarrow$

$xy\overline y + xy\overline x +\overline x + \overline y > \overline{xy}y +\overline{xy}x + x + y\Rightarrow$

$(\overline{x+y})(xy+1) > (x+y)(\overline{xy}+1)\Rightarrow$

$\frac{xy+1}{x+y} > \frac{\overline{xy}+1}{\overline{x+y}}$ (доказана вторая часть неравенства (i)).

\bigskip

5(ii) По условию (ii), задающему множество $T$, выполнены неравенства $\overline x/x \ge \overline a/a$ и $\overline y/y \ge \overline a/a$. Домножая обе части неравенств на равные числа, мы получаем $\overline x a - x\overline a \ge 0$, $\overline y a - y\overline a \ge 0\Rightarrow$

$x\overline x(\overline ya-y\overline a) + y\overline y(\overline xa-x\overline a) + (xa-\overline x\overline a) + (ya-\overline y\overline a) > 0\Rightarrow$

$\overline {xy}xa +\overline{xy}ya+xa+ya > xy\overline{xa}+xy\overline{ya} + \overline{xa}+\overline {ya}\Rightarrow$

$(\overline{xy}+1)(x+y)a > (xy+1)(\overline{x+y})\overline a\Rightarrow$

$\frac{(\overline {xy}+1)/(\overline {x+y})}{(xy+1)/(x+y)} = \frac{\overline {G^{-1}(G(x)G(y))}}{G^{-1}(G(x)G(y))} \ge \frac{\overline a}{a}.$

\bigskip

5(iii) Если в условие (iii) вместо $b$ подставить число $G^{-1}(G(x)G(y)$, то неравенство раскроется следующим образом: $$\ln G(G^{-1}(G(x)G(y)))/\ln G(G^{-1}(\overline {G(x)G(y)})) = \frac{\ln G(x)+\ln G(y)}{\ln(\overline {G(x)}) + \ln(\overline {G(y)})}$$. 

Зная, что $\frac{\ln G(x)}{\ln\overline {G(x)}}\ge \frac{\ln G(a)}{\ln\overline {G(a)}}$ и $\frac{\ln G(y)}{\ln\overline {G(y)}}\ge \frac{\ln G(a)}{\ln\overline {G(a)}}$, мы получаем

$\frac{\ln G(x)+\ln G(y)}{\ln(\overline {G(x)}) + \ln(\overline {G(y)})}\ge \frac{\ln G(a)}{\ln\overline {G(a)}}$ по аналогии с пунктом 1(ii).

\bigskip

По лемме $\ref{rbase}$ мы получаем что $T'(a)\in T$ и $P'(a)\in P$ (т.к. очевидно что $a\in T$). Как следствие, средняя линия трапеции, тривиально замостимой гомотетиями трапеции $A$, лежит в $T$, что и требовалось доказать.

\end{proof}

В этом разделе также докажем предложение \ref{last}.

\begin{proof}[Доказательство предложения \ref{last}]
Мы хотим найти бесконечное количество чисел $b\in \mathbb Q[\sqrt d]$, таких что $b\in T'(a)$ и $\ln G (b)/\ln G(\overline b) = \ln G (a)/\ln G(\overline a)$.

Определим $b_1 = a, b_{n+1} = G^{-1}(G(a)G(b_n))$. Попарная различность всех $b_i$ следует из геометрического смысла условия 5) в лемме \ref{draw}: трапеция $t(b_{n+1})$ получается при сложении верхним и нижним основанием двух трапеций, гомотетичных $t(a)$ и $t(b_n)$, а значит, её отношение оснований меньше. Значит, и её средняя линия $b_{n+1}$ меньше, чем $b_n$.

Докажем по индукции, что $b_i\in T'(a)$. Для $b_1 = a$ это очевидно. Переход делается при помощи свойства 5) из леммы \ref{dbase}: из $a\in T'(a), b_n\in T'(a)$ следует $b_{n+1}\in T'(a)$.

Докажем по индукции, что $\ln G (b_{i})/\ln G(\overline b_{i}) = \ln G (a)/\ln G(\overline a)$. Для $b_1=a$ это, опять же, очевидно. Сделаем шаг индукции от $b_n$ к $b_{n+1}$. По предположению индукции мы также знаем, что $\frac{\ln G(b_n)}{\ln G(\overline b_n)} = \frac{\ln G(a)}{\ln G(\overline a)}$, или, равносильно, $\frac{\ln G(b_n)}{\ln G(a)} = \frac{\ln G(\overline b_n)}{\ln G(\overline a)}$. Воспользуемся этим.

$$\frac{\ln G (b_{n+1})}{\ln G(\overline b_{n+1})} = \frac{\ln G(a)+\ln G(b_n)}{\ln G(\overline a)+\ln G(\overline b_n)}= \frac{\ln G(a)\left(1+\frac{\ln G(b_n)}{\ln G(a)}\right)}{\ln G(\overline a)\left(1+\frac{\ln G(\overline b_n)}{\ln G(\overline a)}\right)}  = \ln G (a)/\ln G(\overline a).$$

Мы доказали, что каждое $b_i$ лежит в $T'(a)$ и удовлетворяет равенству $\ln G (b_{i})/\ln G(\overline b_{i}) = \ln G (a)/\ln G(\overline a)$, ч.т.д.

\end{proof}

\section{Геометрическая интерпретация}\label{geom}

Несмотря на то, что доказываемые теоремы изначально утверждают геометрические факты (и первые две теоремы можно использовать, чтобы явно конструировать замощения трапеций), здесь речь пойдёт не об этой геометрической интерпретации. Мне хотелось бы неформально пояснить, как были придуманы доказательства (и отчасти формулировки) теорем \ref{twodim} и \ref{log}.

Для начала вернусь к лемме \ref{base} и выделю из имеющихся свойств 1) и 6). Числа, лежащие в $P$, можно складывать и домножать на рациональные. Это наводит на мысль, что параллелограммам можно сопоставить некоторое векторное пространство. И если при $P\subset \mathbb Q$ это было ещё не так важно, то при $P\subset \mathbb Q[\sqrt d]$ такой подход сильно помогает понять процесс.

Как было сказано ранее, стандартный параллелограмм задаётся длиной основания. Пойдём дальше: будем изображать параллелограмм $p(a)$ точкой $(\overline a, a)$, а трапецию $t(b)$ точкой $(\overline b, b)$. Оси координатной плоскости я буду называть $\overline y, y$, чтобы лучше видеть, что и по какой оси мы отмечаем.

Так, на чертеже \ref{starting} (для $d=2$) я отметил точку $T$, соответствующую $t(4)$ и $P$, соответствующую $p(1+\sqrt 2)$. Нетрудно заметить, что точки, соответствующие всевозможным трапециям вида $t(b)$ где $b\in\mathbb Q[\sqrt d]$, будут всюду плотны на полуплоскости $y > 1$, а соотвествующие параллелограммам -- на $y > 0$. Для удобства я обозначаю пунктиром прямую, ниже которой не могут находиться точки, соответствующие трапециям.

\begin{figure}[h!]
\centering
\includegraphics[scale=0.6]{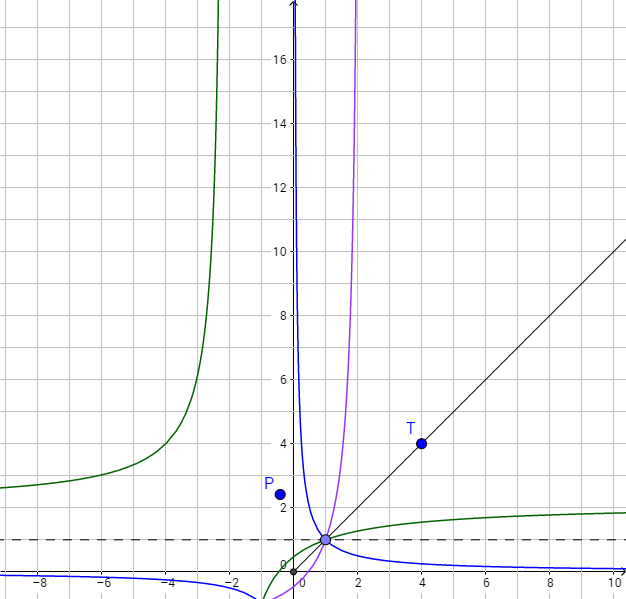}
\caption{Плоскость $(\overline y,y)$ с линиями $h=-3,h=0,h=3,h=\infty$}\label{starting}
\end{figure}

В этом разделе понятие $T(\alpha)$ резонно переопределить не как множество чисел, а как множества точек вида $(\overline y, y)$, таких что трапеция $t(y)$ замощается трапециями $t(a_1),..,t(a_k)$ для $a_1,..,a_k\in\alpha$. Для так определённых множеств $T(\alpha),P(\alpha)$ будет несложно переписать свойства из леммы \ref{base}: например, свойство 1) будет выглядеть как $(\overline a,a)\in P(\alpha), (\overline b,b)\in P(\alpha)\Rightarrow (\overline a+\overline b, a+b)\in P(\alpha)$.

Линии, которые я отметил на чертеже, -- это линии уровня функции $h(\overline y, y) := h(y)$. В частности, луч, на котором лежит $T$, -- это линия уровня $h=\infty$, рациональные числа. Остальные три линии соответствуют $h=-3,h=0$ и $h=3$ (зелёная, синяя, фиолетовая).

Здесь я опускаю формальное доказательство, но линия уровня $h(x)=k$ -- это часть гиперболы $(y+\frac{k}{\sqrt 2})(\overline y - \frac{k}{\sqrt 2})=1-\frac{k^2}{2}$. В частности, у таких гипербол есть вертикальные ассимптоты -- сколь угодно большой $y$ при стремящемся к константе $\overline y$. Неочевидно, что множество точек вида $(y,\overline y)$ на гиперболе окажется плотным -- однако, лемма \ref{h2} это в точности утверждение о том, что на ней есть сколь угодно высокие точки такого вида.

Лемма же \ref{h1} сводится к следующему -- если в $T(\alpha)$ лежит некоторая точка $M$ с луча $\{y=\overline y, y > 1\}$ и некоторая точка $X$ с гиперболы $h(y)=q$, то в ней также лежат все точки (вида $(y,\overline y)$ для $y\in\mathbb Q[\sqrt d]$), находящиеся на гиперболе $h(y)=q$ ниже, чем $X$.

Теперь мы в состоянии осознать геометрический смысл всего доказательства теоремы \ref{twodim}. В ней сказано, что трапеции $t(a), t(b)$ с $\overline a > 0$ и $\overline b < 0$ способны замостить любую другую трапецию. Соответствующие им точки $A,B\in T(a,b)$ расположены по разные стороны от оси $\overline y = 0$ (здесь и далее см.рис \ref{great}).

\begin{figure}[h!]
\centering
\includegraphics[scale=0.6]{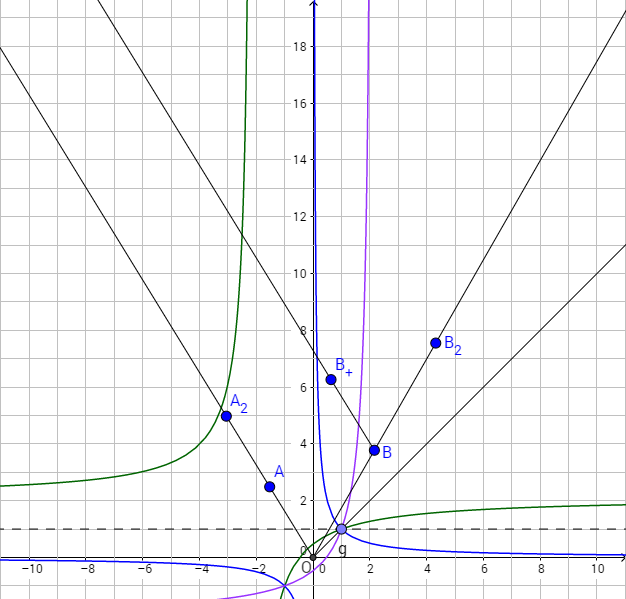}
\caption{Геометрическое толкование доказательства теоремы \ref{twodim}}\label{great}
\end{figure}

Сначала мы пользуемся свойством 3) из леммы \ref{base} -- векторно складываем каждую из этих точек саму с собой, получаем точки $A_2, B_2\in P(\alpha)$, расположенные по разные стороны от оси $\overline y = 0$. Очевидный факт: угол, натянутый на вектора $A_2,B_2$ пересекает каждую из гипербол, содержащих множества $\{h(y)=q\}$ и, как следствие, для любого $q$ этот угол содержит точку $(\overline x, x)$, такую что $h(x) = q$.

У множества $P(\alpha)$ есть крайне удобные свойства 1) и 6) из которых следует его замкнутость относительно векторного сложения и домножения вектора на положительное рациональное число. Из этого следует, что $P(\alpha)$ будет содержать \emph{все} точки вида $(\overline y, y)$ для $y\in\mathbb Q[\sqrt d]$, лежащие внутри угла $AOB$ (шаг не совсем очевиден, и доказывается аналогично лемме \ref{h3}). А если воспользоваться свойством (3), по которому сумма точки из $T(\alpha)$ и $P(\alpha)$ лежит в $T(\alpha)$, то мы можем ещё и доказать, что точки угла $B_+BB_2$ лежат в $T(\alpha)$ (здесь отрезок $B_+B$ построен параллельно $OA$).

Именно эта логика -- представление необходимой точки угла в виде суммы точки из $T(\alpha)$ и линейной комбинации точек из $P(\alpha)$ -- стоит за леммой $\ref{h3}$. В самой лемме нет аналога <<угла>>, потому что мы сразу нацелены на более важный результат -- для каждой гиперболы вида $h(y)=q$ все её достаточно высокие точки лежат в $T(\alpha)$. И, если мы найдём в $T(\alpha)$ точку вида $(m,m)$ ($m\in\mathbb Q$), то, исходя из леммы \ref{h1} или описанного выше свойства гипербол, в $T(\alpha)$ лежат все трапеции вида $t(b)$ для $b\in\mathbb Q[\sqrt d]$.

Чтобы доказательство было закончено, надо доказать, почему $T(\alpha)$ содержит хотя бы одну точку вида $(m,m)$ для рационального $m$. В доказательстве теоремы \ref{twodim} разобрано, почему если $x\in T(\alpha)$ и $h(x)=0$, то $T(\alpha)$ содержит некоторое рациональное число. На чертеже этот факт не нагляден, поэтому не был показан.
\bigskip

Теперь, когда мы освоились с геометрическим представлением множества трапеций, мне бы хотелось перейти к геометрической стороне теоремы \ref{log} и, в частности, пояснить, чем интересно условие, заданное трансцендентной функцией.

\begin{figure}[h!]
\centering
\includegraphics[scale=0.7]{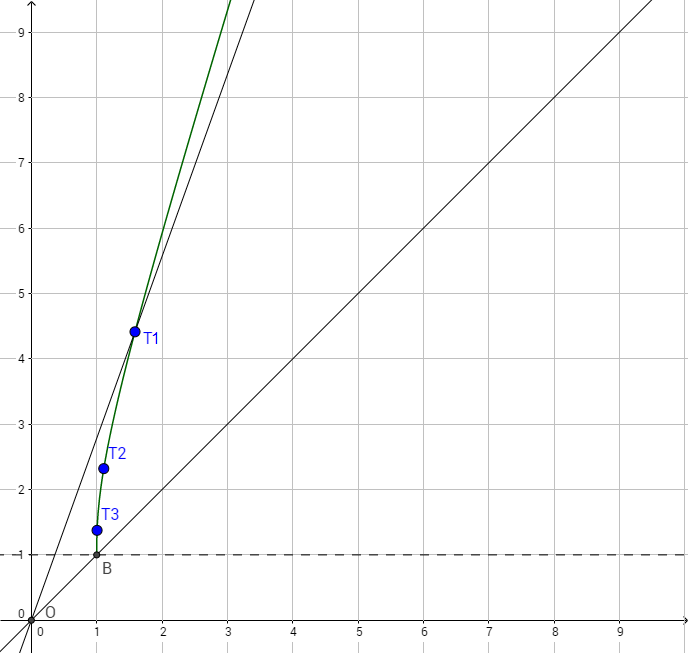}
\caption{К теореме \ref{log}}\label{later}
\end{figure}

В этом разделе под $T'(a)$ я так же понимаю множество \emph{точек}, соответствующих тривиально замостимым трапециям.

Мы рассматриваем множество $T'(a)$ для $a>\overline a > 1$. На рисунке \ref{later} координатам $(\overline a, a)$ для $a=3+\sqrt2$ соответствует точка $T_1$.

Луч $OB$ задаётся уравнением $\overline y = y$, луч $OT_1$ задаётся уравнением $\overline y/y = \overline a/a$. Кривая $T_1T_2B$ задана уравнением $\ln G (y)/\ln G(\overline y) = \ln G (a)/\ln G(\overline a)$ или же равносильным уравнением $y = F_c(\overline y)$. Здесь обозначение $F_c$ взято из леммы \ref{000}, равносильность двух уравнений доказана в самом начале доказательства леммы \ref{043}.

В доказательстве теоремы \ref{log} рассматриваются множества чисел $T$ и $P$. Из неравенств на эти множества очевидна геометрическая интерпретация -- множеству $P$ соответствуют точки внутри угла $T_1OB$, множеству $T$ -- точки, лежащие внутри того же угла, но ниже кривой $T_1T_2B$. Доказывается, что эти два множества (в данной интерпретации, это множества точек) удовлетворяют всем свойствам из леммы \ref{base}. В частности, пункт $4(iii)$ доказательства переформулируется как: <<Для точки $a$ из области $T$ и точки $b$ из области $P$ доказать, что $a+b$ будет лежать ниже кривой $T_1T_2B$>>. Геометрически очевидно, что это стоит доказывать через тот факт, что наклон кривой $T_1T_2B$ в каждой точке превышает наклон прямой $OT_1$. Это и делается в лемме \ref{000}. Очевидная мысль использовать выпуклость функции в этой ситуации не слишком помогает -- выпуклость так заданной кривой тоже доказывается через сложные вычисления.

Перейдём к геометрическим соображениям, выходящим за пределы доказанных нами теорем. Я хотел бы обратить внимание на участок кривой $BT_1$. Сама кривая описывается трансцендентным уравнением -- это график функции $F_c(x)$ из леммы \ref{000}. Однако, по предложению \ref{last}, на этой кривой лежит бесконечное количество точек, принадлежащих <<сетке>> $\{(\overline y, y):y\in\mathbb Q[\sqrt d]\}$ (на рис. \ref{later} отмечены $T_2,T_3$). Я не знаю способа доказать, что на этом участке кривой больше нет точек из этого множества. Также я не вижу очевидного способа найти на кривой точки сетки выше, чем $T_1$. Это затруднение интуитивно понятно: точки вида $T_n$ соответствуют сложениям трапеции из нескольких гомотетий трапеции $t(a)$ (см. доказательство леммы \ref{last}), но если такую операцию обратить (разрезать трапецию на $n$ подобных друг другу), то отношение оснований полученных трапеций будет лежать за пределами $\mathbb Q[\sqrt d]$.

Таким образом, необходимое условие из теоремы \ref{log} может быть достаточным, но даже если оно не достаточное, указанная трансцендентная кривая проходит через бесконечное число точек на границе множества <<реализуемых>> точек $(b,\overline b)$. Поэтому трансцендентность ограничивающей кривой может считаться достоверно установленной новой особенностью, пока не появлявшейся в работах по разрезанию многоугольников на подобные.


\begin{thebibliography}{10}

\bibitem{BST} R.~L.~Brooks, C.~A.~B.~Smith, A.~H.~Stone, and W.~T.~Tutte, \textrm{The dissection of rectangles into squares}, Duke Math.~J.~\textbf{7} (1940), 312--340.

\bibitem{german} M.~Dehn, \textrm{\"Uber die Zerlegung von Rechtecken in Rechtecke}, Math.~Ann.~\textbf{57} (1903), 314--332.

\bibitem{FLR2} C.~Freiling, M.~Laczkovich, and D.~Rinne,
    \textrm{Rectangling a rectangle},
    Discr.~Comp.~Geom.~\textbf{17} (1997), 217-225.
    
\bibitem{FL1} C. Freiling and D. Rinne, Tiling a square with similar rectangles, Math. Res. Lett. 1 (1994),
547–558;

\bibitem{Geo} A. Georgakopoulos, The Boundary of a Square Tiling of a Graph coincides with the Poisson Boundary, Invent.Math., 203(3), 773-821, 2016.

\bibitem{ken} R.~Kenyon, \textrm{Tilings and discrete Dirichlet problems}, Israel J.~Math.~\textbf{105:1} (1998), 61--84.

\bibitem{FL2}M. Laczkovich and G. Szekeres, Tiling of the square with similar rectangles,
Discr. Comp. Geom. 13 (1995), 569–572

\bibitem{Sharov} F. Sharov, \textrm{Dissection of a rectangle into rectangles with given side ratios}, Mat. Prosveschenie 3rd ser. \textbf{20} (2016), 200--214 [arXiv:1604.00316].
    
\bibitem{PrSk} M.~Skopenkov, M.~Prasolov and S.~Dorichenko, \textrm{Dissections of a metal rectangle}, Kvant \textbf{3} (2011), 10--16 [arXiv:1011.3180].

\bibitem{triangles} B.~Szegedy, \textrm{Tilings of the square with similar right triangles}, Combinatorica \textbf{21:1} (2001), 139--144.

\end{thebibliography}
\end{document}